\newcommand{\Comments}{1}
\newcommand{\Comments}{0}
\newcommand{\tr}{\top}
\newcommand{\B}{\mathcal{B}}
\newcommand{\F}{\mathcal{F}}
\newcommand{\G}{\mathcal{G}}
\newcommand{\K}{\mathcal{K}}
\renewcommand{\P}{\mathcal{P}}
\newtheorem{theorem}{Theorem}[section]
\newtheorem{definition}[theorem]{Definition}
\newcommand{\ifcomments}[1]{\ifnum\Comments=1{#1}\fi}
\newcommand{\fnote}[1]{\ifcomments{\footnote{\textcolor{gray}{#1}}}} % hidden note / comment
\title{Topological Entropy Bounds for Hyperbolic Plateaus of the\\ H\'{e}non Map}
\author{Rafael M. Frongillo\thanks{\url{raf@cs.berkeley.edu}.
Work done at Cornell University and the University of California at Berkeley.}
%  \\[10pt] Advisor: John Smillie%
}
\begin{document}

\maketitle

\begin{abstract}
Combining two existing rigorous computational methods, for verifying hyperbolicity (due to Arai) and for computing topological entropy bounds (due to Day et al.), we prove lower bounds on topological entropy for 43 hyperbolic plateaus of the H\'{e}non map.  We also examine the 16 area-preserving plateaus studied by Arai and compare our results with related work.  Along the way, we augment the algorithms of Day et al. with routines to optimize the algorithmic parameters and simplify the resulting semi-conjugate subshift.
\end{abstract}

% \begin{keywords} 
%   topological entropy, symbolic dynamics, Conley index, Henon map
% \end{keywords}
% \begin{AMS}
% 37B10, 37B30, 37B50, 37C25, 37C29, 37C70, 37L65, 37M99, 55U99, 65R20
% \end{AMS}

\section{Introduction} \label{sec:intro}

Dynamical systems theory has seen the emergence of many rigorous computational methods in recent years.  Such tools often extend the realm of provable theorems well beyond what is possible with chalk and blackboard\fnote{, or even manual use of popular computational mathematics environments such as Matlab or Mathematica}.  This is particularly true of the recent automated tools to compute topological entropy bounds~\cite{newhouse, dft} and to prove hyperbolicity~\cite{zinplats, hruska, mazur}.

Let us briefly recall the relevant characteristics of these methods.  Both techniques for proving entropy bounds first construct a subshift of finite type (SFT), whose topological entropy is easily computed and is a lower bound of that of the original system.  Newhouse et al. compute rigorous approximations of stable and unstable manifolds of periodic orbits, and then construct a SFT using pieces of these manifolds; in this regard, their technique could be considered a rigorous version of the trellis method.   Day, Frongillo, and Trevi\~{n}o (DFT)~\cite{dft} construct a discrete multivalued map from a discretization of the phase space, and then apply discrete Conley index theory to prove a semi-conjugacy to a particular SFT.  In certain settings, such as the one we study here, the DFT method is \emph{completely} automated, meaning that after a simple initialization, no further manual input is required.  It is unclear whether the method of Newhouse et al. shares this automation property.

Hruska~\cite{hruska} developed one of the first automated methods for rigorously verifying hyperbolicity, based on the computation of cone fields.  In contrast, Arai~\cite{zinplats} employs a more indirect technique using the notion of quasi-hyperbolicity.  His approach allows for more efficient computations than Hruska, but does not guarantee that the nonwandering set is not just a finite collection of periodic orbits, or even that it is nonempty.

In~\cite{zinplats}, Arai identifies several hyperbolic regions of the H\'enon map:
\begin{equation}
  f_{a,b}(x,y) = (a-x^2+by, x)
  \label{eq:henon-intro}
\end{equation}
These regions are dubbed \emph{hyperbolic plateaus} because topological entropy is constant across any such region.  Arai's technique does not reveal anything about these topological entropy values, however, and it is therefore natural to combine his computations with an automated method for proving topological entropy bounds.

In this paper we use the DFT method~\cite{dft} to compute lower bounds for topological entropy of the hyperbolic plateaus of H\'enon computed by Arai in~\cite{zinplats}.  The constant entropy on each plateau enables us to extend a lower bound computation from a single setting of parameter values to an entire region of the parameter space.  Additionally, the full automation of the DFT method enables us to study a total of 58 parameter values with essentially the same manual effort as studying one.  We use the DFT method as a black box, but we give a high-level overview of the approach in Section~\ref{sec:techniques}, including new techniques for improving robustness and simplifying the resulting SFT.

Theorems~\ref{thm:mainresult} and~\ref{thm:ap-bounds} summarize our results.  To the author's knowledge, all of these rigorous lower bounds are the largest known for their corresponding parameters.  We selected the parameter regions so that the bounds obtained might give a global picture of the entropy of the H\'{e}non map as a function of the parameters; see Figure~\ref{fig:plat-ents-3d} for such a picture.

We also study several of the area-preserving H\'{e}non maps in Section~\ref{sec:ap}, which have been well-studied in the Physics community~\cite{prunehenon, homoclinicbif, howtoprune}.  Recently, some precise rigorous results emerged as well~\cite{zinloops}.  We find that our rigorous lower bounds match or are very close to estimates given in previous work, and match the rigorous results exactly when applicable.

\begin{figure}[!ht] \centering
  \includegraphics[width = 0.8\textwidth]{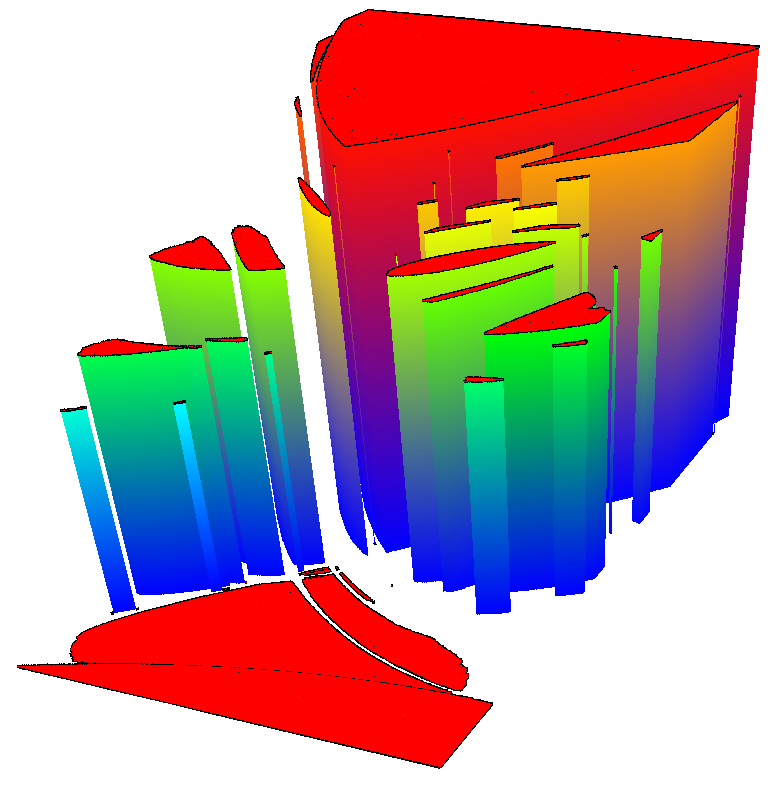}
  \vspace{-10pt}
  \caption{Rigorous lower bounds for topological entropy for the hyperbolic plateaus in Figure~\ref{fig:plat-nums}. The height of each plateau in the visualization is proportional to the entropy bound computed. See Theorem~\ref{thm:mainresult} or Figure~\ref{fig:plat-ents-flat} for the actual bounds.}
  \label{fig:plat-ents-3d}
\end{figure}

\section{Background} \label{sec:bg}

We first review basic definitions related to symbolic dynamics, topological entropy, and hyperbolicity, in Sections~\ref{sec:symbolics} and~\ref{sec:hyp}.  The DFT approach relies heavily on a combinatorial version of the discrete Conley index; in Section~\ref{sec:combinatorial}
we discuss the index at a high level, and introduce the combinatorial structures that relate it to our computational setting.

\subsection{Symbolic dynamics and topological entropy} \label{sec:symbolics}

Define a \emph{symbol space} $X_n = \{0,\dots,n-1\}^\mathbb{Z}$ to be the set of all bi-infinite sequences on $n$ symbols.  It is well-known that $X_n$ is a complete metric space.  Let the \emph{full n-shift} $\sigma: X_n\rightarrow X_n$ be the map acting on $ X_n$ by $(\sigma(x))_i = x_{i+1}$.  Given a directed graph $G$ on $n$ nodes with $n\times n$ transition matrix $A$ with $A_{i,j}\in\{0,1\}$, we can define an induced symbol space $X_G\subseteq X_n$, where $x\in X_G$ if and only if for $x = (\dots,x_i,x_{i+1},\dots)$, $A_{x_i,x_{i+1}} = 1$ for all $i$.  That is, $X_G$ consists of all sequences in $X_n$ with transitions of $\sigma$ allowed by the edges of $G$.  Equipped with the corresponding shift map $\sigma_G : X_G \to X_G$, we call $(X_G,\sigma_G)$ a \emph{subshift of finite type}.

We use topological entropy to measure the relative complexity of different dynamical systems.  If the topological entropy of a dynamical system $f$, denoted $h(f)$, is positive, we say that $f$ is {\em chaotic}.

\begin{definition}[\cite{coding}, adapted from~\cite{bowen}]
  Let $f:X\to X$ be a continuous map with respect to a metric $d$.  We say that a set $W\subseteq X$ is $(n,\varepsilon)$-separated under $f$ if for any distinct $x,y\in W$ we have $d\bigl(f^j(x),\;f^j(y)\bigr) >\varepsilon$ for some $0\leq j < n$.  The {\em topological entropy} of $f$ is
  \begin{equation}
	h(f) := \lim_{\varepsilon\to\infty} \; \limsup_{n\to\infty}
	\frac{log(s_f(n,\varepsilon))}{n},
  \end{equation}
  where $s_f(n,\varepsilon)$ denotes the maximum cardinality of an $(n,\varepsilon)$-separated set under $f$.
\end{definition}

While topological entropy can be difficult to calculate in general, there is a simple formula for subshifts of finite type which is given in the following theorem.  For a proof, see~\cite{coding} or~\cite{robinson}.

\begin{theorem}
  Let $G$ be a directed graph with transition matrix $A$, and let $(X_G,\sigma_G)$ be the corresponding subshift of finite type.  Then the topological entropy of $\sigma_G$ is $h(\sigma_G) = \log(\text{sp}(A))$, where $\text{sp}(A)$ denotes the spectral radius (maximum magnitude of an eigenvalue) of $A$.
\end{theorem}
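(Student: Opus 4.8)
The plan is to translate the dynamical definition of $h(\sigma_G)$ into a combinatorial count of admissible words of the subshift, and then to identify the exponential growth rate of that count with $\log(\text{sp}(A))$ via Gelfand's spectral-radius formula. (If $A$ is nilpotent then $X_G = \emptyset$; I would dispose of this degenerate case at the outset under the convention that the empty system has zero entropy, and ignore it thereafter.) Equip $X_G$ with the standard metric $d(x,y) = 2^{-k}$, where $k = \min\{\,|i| : x_i \neq y_i\,\}$ for $x \neq y$; this induces the product topology and makes $\sigma_G$ a homeomorphism. The elementary observation driving the proof is that for each $\varepsilon \in (0,1)$ there is an integer $m = m(\varepsilon) \ge 0$ such that $d(x,y) > \varepsilon$ holds exactly when $x$ and $y$ disagree at some coordinate in $[-m,m]$.

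It follows that two points of $X_G$ are $(n,\varepsilon)$-separated under $\sigma_G$ precisely when their coordinate sequences disagree somewhere in the window $[-m,\,n-1+m]$, since $d(\sigma_G^j x, \sigma_G^j y) > \varepsilon$ amounts to $x,y$ disagreeing in $[j-m,j+m]$ and we range over $0 \le j < n$. Hence the restriction map to this window is injective on any $(n,\varepsilon)$-separated set and surjects (after choosing one preimage per word) onto the admissible words of length $n + 2m$, so $s_{\sigma_G}(n,\varepsilon) = w_{n+2m}$, where $w_\ell$ is the number of admissible words of length $\ell$. Making the surjectivity rigorous requires that every admissible finite word extend to a bi-infinite point of $X_G$; this is the one genuinely fiddly point, and I would handle it by first replacing $G$ with its maximal recurrent sub-digraph (deleting transient vertices). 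That replacement leaves $\text{sp}(A)$ unchanged, alters each $w_\ell$ by at most a bounded multiplicative factor, and therefore affects neither side of the claimed identity, while securing the needed extension property.

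For the count itself, an admissible word of length $\ell$ is exactly a path on $\ell - 1$ edges in $G$, so $w_\ell = \mathbf{1}\trans A^{\ell-1}\mathbf{1}$ with $\mathbf{1}$ the all-ones vector. Since $A$ is entrywise nonnegative, for every fixed submultiplicative matrix norm $\|\cdot\|$ there are constants $0 < c \le C$, depending only on the size of $A$, with $c\,\|A^{j}\| \le \mathbf{1}\trans A^{j}\mathbf{1} \le C\,\|A^{j}\|$ for all $j$; hence
\begin{equation}
  \lim_{\ell\to\infty}\frac{1}{\ell}\log w_\ell
  \;=\; \lim_{j\to\infty}\frac{1}{j}\log\|A^{j}\|
  \;=\; \log\bigl(\text{sp}(A)\bigr),
\end{equation}
where the last equality is Gelfand's formula, valid with no irreducibility or primitivity hypothesis on $A$ --- which is precisely why one need not invoke a Perron--Frobenius eigenvector. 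Combining with the previous paragraph, for each fixed $\varepsilon$ we obtain $\limsup_{n\to\infty}\frac{1}{n}\log s_{\sigma_G}(n,\varepsilon) = \log(\text{sp}(A))$ with no dependence on $\varepsilon$, so the outer limit in the definition of $h$ is trivial and $h(\sigma_G) = \log(\text{sp}(A))$. As flagged, the main obstacle is the extension/recurrence bookkeeping in the middle step; the rest is the definition of the metric together with a direct appeal to Gelfand's formula.
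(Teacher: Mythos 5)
The paper does not actually prove this theorem---it quotes it as a standard fact and defers to \cite{coding} and \cite{robinson}---so there is no in-paper argument to compare against. Your proof is correct and is essentially the standard textbook one: with the metric $d(x,y)=2^{-k}$, an $(n,\varepsilon)$-separated set is exactly a set of points pairwise distinguishable on a coordinate window of length $n+2m(\varepsilon)$, so $s_{\sigma_G}(n,\varepsilon)$ equals the number of admissible blocks of that length, and the exponential growth rate of the block count is $\log(\text{sp}(A))$. You correctly identify the one fiddly step (passing to the maximal essential/recurrent subgraph $G'$ so that every finite path extends to a bi-infinite one): pruning the stranded vertices removes only $1\times 1$ zero blocks from the Frobenius normal form, so $\text{sp}(A)$ is unchanged, and the change in the path count is at worst subexponential, which is all the limit requires---indeed you could sidestep the ``bounded multiplicative factor'' claim entirely by noting $X_G = X_{G'}$ and counting paths only in $G'$. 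Your use of Gelfand's formula, legitimized by the observation that for nonnegative $A$ the quantity $\mathbf{1}\trans A^{j}\mathbf{1}$ is itself a submultiplicative norm of $A^j$, is a clean way to avoid the Perron--Frobenius reduction to irreducible components that the cited references use, and it genuinely needs no irreducibility or primitivity hypothesis. The only caveat is the one you flagged yourself: for nilpotent $A$ the identity as literally stated fails ($h=0$ by convention versus $\log 0 = -\infty$), so the theorem implicitly assumes $X_G\neq\emptyset$; with that understood, your argument is complete.
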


When studying a complex map $f$, it is sometimes useful to study a subsystem of $f$ which can be precisely related to $f$ via a semi-conjugacy.

\begin{definition}
Let $f:X\rightarrow X$ and $g:Y\rightarrow Y$ be continuous maps for topological spaces $X, Y$. A \emph{semi-conjugacy} from $f$ to $g$ is a continuous surjection $\phi:X\rightarrow Y$ with $\phi\circ f = g\circ \phi$. We say that $f$ is \emph{semi-conjugate} to $g$ if there exists a semi-conjugacy from $f$ to $g$.  If additionally $\phi$ is a homeomorphism, then $f$ and $g$ are \emph{conjugate}.
\end{definition}

Particularly relevant to our setting is the following result.

\begin{theorem}[\cite{robinson}]
  \label{thm:topentsemiconj}
  Let $f$ and $g$ be continuous maps, and let $\phi$ be a
  semi-conjugacy from $f$ to $g$.  Then $h(f) \geq h(g)$.
\end{theorem}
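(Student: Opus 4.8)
The plan is to work directly from the $(n,\varepsilon)$-separated-set definition of entropy, lifting a large separated set for $g$ to one of the same size for $f$ by choosing $\phi$-preimages of its points. Write $f:X\to X$ and $g:Y\to Y$, so that $\phi:X\to Y$ is a continuous surjection with $\phi\circ f=g\circ\phi$; iterating gives $\phi\circ f^{j}=g^{j}\circ\phi$ for all $j\ge 0$, which is the only structural fact about the semi-conjugacy I will use.

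First I would fix $\varepsilon>0$. Since (as is standard in this setting) $X$ is compact, $\phi$ is uniformly continuous, so there is $\delta>0$ such that $d(a,b)\le\delta \Rightarrow d(\phi a,\phi b)\le\varepsilon$ for all $a,b\in X$. Let $W\subseteq Y$ be an $(n,\varepsilon)$-separated set for $g$ with $|W|=s_g(n,\varepsilon)$. Using surjectivity, choose for each $y\in W$ a point $x_y\in\phi^{-1}(y)$ and set $W'=\{x_y:y\in W\}$; distinct points of $W$ have distinct $\phi$-images, so $|W'|=|W|$.

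The crux is to verify that $W'$ is $(n,\delta)$-separated under $f$. Given $x_y\ne x_{y'}$ in $W'$, the $(n,\varepsilon)$-separation of $W$ yields some $0\le j<n$ with $d\bigl(g^{j}(y),g^{j}(y')\bigr)>\varepsilon$. But $g^{j}(y)=g^{j}(\phi(x_y))=\phi(f^{j}(x_y))$, and likewise for $y'$, so $d\bigl(\phi(f^{j}x_y),\phi(f^{j}x_{y'})\bigr)>\varepsilon$; the contrapositive of the choice of $\delta$ then forces $d\bigl(f^{j}x_y,f^{j}x_{y'}\bigr)>\delta$. Hence $W'$ is $(n,\delta)$-separated, giving $s_f(n,\delta)\ge s_g(n,\varepsilon)$ for every $n$.

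Finally I would take $\tfrac1n\log$, then $\limsup_{n\to\infty}$, to obtain $\limsup_n\tfrac1n\log s_f(n,\delta)\ge\limsup_n\tfrac1n\log s_g(n,\varepsilon)$, and let $\varepsilon\downarrow0$. Because an $(n,\delta)$-separated set is a fortiori $(n,\delta')$-separated for every $\delta'<\delta$, the left-hand quantity is nondecreasing as $\delta\downarrow 0$, so its limit $h(f)$ dominates it for our particular $\delta=\delta(\varepsilon)$; thus $h(f)\ge\limsup_n\tfrac1n\log s_g(n,\varepsilon)$ for all $\varepsilon>0$, and sending $\varepsilon\downarrow0$ yields $h(f)\ge h(g)$. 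The only genuinely delicate point is producing a single $\delta$ that works uniformly over all pairs in $W'$, which is exactly where compactness (uniform continuity of $\phi$) enters; the remainder is bookkeeping with the intertwining relation $\phi\circ f^{j}=g^{j}\circ\phi$ and monotonicity of $s_f(n,\cdot)$ in the scale.
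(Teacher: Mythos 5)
Your argument is correct and complete: it is the standard separated-set proof of this fact, which the paper itself does not prove but simply cites to \cite{robinson}, where essentially this same lifting-of-separated-sets argument appears. The one hypothesis you add explicitly, compactness of the domain (to get uniform continuity of $\phi$), is indeed needed and is implicit in the paper's setting, and you correctly take $\varepsilon\to 0$ in the entropy definition (the paper's displayed definition has a typo reading $\varepsilon\to\infty$).
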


Note that if $f$ and $g$ are conjugate, Theorem~\ref{thm:topentsemiconj} gives us $h(f) = h(g)$.  In other words, topological entropy is invariant under conjugacy.

\subsection{Hyperbolicity} \label{sec:hyp}

To begin we define uniform hyperbolicity.  Throughout the paper, we will refer to this property simply as hyperbolicity.  Let $X$ be a manifold.
\begin{definition}[\cite{guck}] A map $f:X\to X$ is said to be (uniformly) \emph{hyperbolic} if for every $x\in X$ the tangent space $T_x X$ for $f$ is a direct sum of stable and unstable subspaces; more precisely, if we have $T_x X = E^s(x) \oplus E^u(x)$, where $E^s(x)$ and $E^u(x)$ satisfy the following inequalities for some $C>0$ and $0<\lambda<1$, and for all $n\in\mathbb{N}$:
  \begin{enumerate}\setlength{\itemsep}{0pt}
  \item $\|Df^n(v)\| \leq C\lambda^n\|v\|$ for all $v\in E^s(x)$.
  \item $\|Df^{-n}(v)\| \leq C\lambda^n\|v\|$ for all $v\in E^u(x)$.
  \end{enumerate}
\end{definition} This structure can be thought of as a generalization of the structure of the Smale horseshoe, namely that there are invariant directions, and there is uniform contraction and expansion in the stable and unstable directions, respectively.  Some useful properties of hyperbolic systems are discussed below, but for more details see~\cite{robinson} and~\cite{guck}.

An important property of hyperbolic maps is that they are structurally stable~\cite{robinson}, which implies that all maps in the same hyperbolic region are conjugate.  Thus, by Theorem~\ref{thm:topentsemiconj}, the topological entropy is constant within such a region.  For this reason, we will henceforth call these regions \emph{hyperbolic plateaus}.

Hyperbolicity often makes it easier to identify interesting dynamics, but it is important to note that sometimes a system can be ``vacuously'' hyperbolic, in the sense that it is hyperbolic but there is no recurrent behavior.  A helpful concept in this context is the \emph{nonwandering set}.

\begin{definition}[\cite{robinson}] The \emph{nonwandering set} of a map $f$ is the set of points $x$ for which every neighborhood $U$ of $x$ has $f^n(U)\cap U \neq \varnothing$ for some $n\geq 1$.
\end{definition}

\subsection{Combinatorial structures and the discrete Conley index}
\label{sec:combinatorial}

As we will be performing rigorous computations, we will naturally be interested in finite representations of the continuous map $f:X\to X$ in question.  Our main tool in this regard is a \emph{combinatorial enclosure}, a discrete ``outer-approximation'' of $f$.  We first discuss how we discretize the phase space $X$, which we assume here to be a subset of $\mathbb{R}^n$, though the constructions that follow generalize to more complicated spaces as well.

We begin by setting up a grid $\G$ on $X$, composed of finitely many cubical complices $\B_i$.  In practice, all elements of the grid are rectangles, which we call \emph{boxes}, represented as products of intervals (viewed in some nice coordinate chart); that is, for each $\B_i\in\G$, we can write $\B_i = \prod_{k=1}^n[x^i_k,y^i_k]$ for some $x^i,y^i\in\mathbb{R}^n$.  In the present paper, for each $k$ the interval widths $y^i_k - x^i_k$ are constant for all $i$, meaning the shape of all boxes is the same.  For a collection of boxes $\K\subseteq\G$, we denote by $|\K|$ its \emph{topological realization}, that is, its corresponding subset of $X$.

In our setting, we create $\G$ by selecting one box $\B$, called the \emph{initial box}, which encloses the entire area we wish to study.  We then subdivide $\B$ evenly $d$ times in each coordinate direction, meaning we replace $|\B|$ with $2^n$ boxes by splitting each coordinate in half, and then we recurse on each of those boxes, with total recursion depth $d$.  This gives a grid $\G$ with $2^{dn}$ boxes, each of size $2^{-d}$ relative to the initial box $\B$.  The integer $d$ will be refered to as the \emph{resolution} or \emph{depth}.  

After discretizing the phase space, we must discretize the map $f$ itself.  To do this we introduce the combinatorial enclosure.
\begin{definition}
A \emph{multivalued map} $F: X\rightrightarrows X$ is a map from $X$ to its power set, so that $F(x)\subseteq X$. If $F$ is acyclic\footnote{$F$ is acyclic if for all $x$, $F(x)$ has trivial homology on all levels (no ``holes'').}  and we have $f(x)\in F(x)\; \forall x\in X$ for some continuous single-valued map $f$, then $F$ is an \emph{enclosure} of $f$.
\end{definition}
\begin{definition}
\label{def:comb-enc}
Given an enclosure $F$ of $f$, a \emph{combinatorial enclosure} of $f$ is a multivalued map $\F:\G\rightrightarrows \G$ defined by
$\F(\B) = \{\B'\in\G:|\B'|\cap F(|\B|)\neq\varnothing\}$.
\end{definition}
The reason multivalued maps and enclosures are used in our computations is that, if constructed properly, they enable rigorous results.  In particular, for each $\B\in\G$ we use the interval arithmetic library Intlab~\cite{intlab} to compute rigorous outer approximations $Y_\B$ of $f(|\B|)$, and let $F(x) = Y_\B$ for all $x\in|\B|$.  Defining $\F$ as in Definition~\ref{def:comb-enc}, this technique keeps track of the error terms in the computations of the image of a box, ensuring that the true image $f(|\B|)$ is contained in $|\F(\B)|$.  Note that $|\F|$ is also an enclosure of $f$.

In this combinatorial setting, we can apply a version of the discrete Conley index to gather rigorous information about the dynamics of the underlying map.  We will not define the Conley index formally here, giving instead a high-level overview of the concepts immediately relevant to our study.  For a full treatment of Conley index theory, see~\cite{salamon,m-mrozek}; for the definitions and algorithms needed in our discrete setting, see~\cite{dft}.

Given some compact set $N\subseteq X$, its \emph{maximal invariant set} is the set of points $x\in N$ such that there exists a trajectory of $f$ through $x$ which stays entirely within $N$ for all forward and backward time.\footnote{That is, there exists some %$\{\ldots,x_{-2},x_{-1},x_0,x_1,x_2,\ldots\}\subseteq N$
$\{x_n\}_{n\in\mathbb{Z}} \subseteq N$
such that $f(x_n) = x_{n+1}$ for all $n\in\mathbb{Z}$ and $x_0 = x$.}  A central concept of Conley index theory is that of \emph{isolation}: we say that an invariant set $S$ (that is, a set with $f(S)=S$) is \emph{isolated} if there is a compact set $N$, called the \emph{isolating neighborhood} of $S$, such that $S$ is contained in the interior of $N$ and $S$ is the maximal invariant set of $N$.  From this notion, a \emph{(combinatorial) index pair} for the enclosure $\F$ is a pair $(\P_1,\P_0)$ of sets of boxes satisfying a two properties: (a) letting $P_i = |\P_i|$, the set $\overline{P_1\setminus P_0}$ must be an isolating neighborhood, and (b) if we contract the set $P_0$ to a point denoted $[P_0]$, the map $f$ induces on the pointed quotient space $(P_1/P_0,[P_0])$ must be continuous.\footnote{We define the induced map $g$ on this space as follows: define $g([P_0]) = [P_0]$, and for $x\in P_1\setminus P_0$, define $g(x)=[P_0]$ if $f(x)\in P_0$ and $g(x) = f(x)$ otherwise.}
Finally, the \emph{Conley index} of the index pair $(\P_1,\P_0)$ is a particular topological invariant of its maximal invariant set based on relative homology; in our setting, the index can be represented as a finite sequence of matrices describing the map that $f$ induces on each level of homology.\footnote{Specifically, the index is the shift equivalence class of the induced map on relative homology groups of the quotient space.    Again, for the full details, see~\cite{salamon,m-mrozek,dft}.}

With the Conley index, we can make strong statements about the behavior of $f$.  For example, if the Conley index corresponding to index pair $(\P_1,\P_0)$ is nontrivial,\footnote{The index is trivial if it is the shift equivalence class of the zero map.} the maximal invariant set of $\overline{P_1\setminus P_0}$ is nonempty.
Along the same lines, but using a more nuanced analysis, the DFT method is able to take the Conley index and compute a SFT which is semi-conjugate to $f$.
We describe the DFT method at a high level in Section~\ref{sec:dft}.  Note however that computing the Conley index in the form of an induced map on homology is a difficult task in and of itself.  For a thorough treatment of computational homology, dealing with this and other applications, see~\cite{CompHom}.  For our computations, we use the \emph{homcubes} package~\cite{homcubes}, part of the software package \emph{CHomP}~\cite{chomp}.

\section{Simplifying Subshifts} \label{sec:shifteq}
Given a subshift of finite type $(X_G,\sigma_G)$ for a graph $G$, it is often of interest to know whether there is a graph $H$ on fewer vertices such that $(X_G,\sigma_G)$ and $(X_H,\sigma_H)$ are conjugate.  To this end, we recall the notion of strong shift equivalence.

\begin{definition}[Strong shift equivalence]
  Let $A$ and $B$ be matrices.  An {\em elementary shift equivalence} between $A$ and $B$ is a pair $(R,S)$ such that
  \begin{equation}
	A = RS \text{   and   } B = SR.
  \end{equation}
  In this case, we write $(R,S):A\to B$.  If there is a sequence of such elementary shift equivalences $(R_i,S_i):A_{i-1}\to A_{i}, \; 1\leq i\leq k$, we say that $A_0$ and $A_k$ are \emph{strongly shift equivalent}.
\end{definition}

This notion is useful because of the following result due to R. F. Williams.

\begin{theorem}[\cite{williams1973classification}]
  \label{thm:shifteqconj}
  For directed graphs $G$ and $H$, the corresponding subshifts $(X_G,\sigma_G)$ and $(X_H,\sigma_H)$ are conjugate if and only if the transition matrices of $G$ and $H$ are strongly shift equivalent.
\end{theorem}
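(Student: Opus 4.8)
The plan is to prove the two implications separately: the ``if'' direction by an explicit construction, and the ``only if'' direction by reducing, via Williams' decomposition of conjugacies into elementary state splittings, to the case of a single splitting. Throughout, for a matrix $A$ over $\Z_{\geq 0}$ write $X_A$ for the edge shift (subshift of finite type) of any directed graph with transition matrix $A$, so that $X_A=X_G$ and $\sigma_A=\sigma_G$ whenever $A$ is the transition matrix of $G$; this is harmless since every $\Z_{\geq 0}$ matrix is the transition matrix of some graph.

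For the ``if'' direction, note that strong shift equivalence is by definition the transitive closure of elementary shift equivalence and that conjugacy of subshifts is transitive, so it suffices to build, from a single elementary shift equivalence $(R,S):A\to B$, a conjugacy $X_A\to X_B$, and then compose these along the defining chain $(R_i,S_i):A_{i-1}\to A_i$. So suppose $A=RS$ and $B=SR$ with $R$ a $p\times q$ and $S$ a $q\times p$ matrix over $\Z_{\geq 0}$. The identity $A_{IJ}=\sum_K R_{IK}S_{KJ}$ lets us fix, for all states $I,J$, a bijection between edges $I\to J$ in the graph of $A$ and pairs consisting of an ``$R$-edge'' $I\to K$ followed by an ``$S$-edge'' $K\to J$; dually, $B_{KL}=\sum_J S_{KJ}R_{JL}$ matches edges of the graph of $B$ with pairs consisting of an $S$-edge followed by an $R$-edge. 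Hence a bi-infinite path in the graph of $A$ refines uniquely to a bi-infinite alternating sequence $\dots R\,S\,R\,S\dots$ of edges of the bipartite graph on $\setof{1,\dots,p}\sqcup\setof{1,\dots,q}$ determined by $R$ and $S$, and regrouping this sequence as $\dots S\,R\,S\,R\dots$ reads it off as a bi-infinite path in the graph of $B$. This regrouping is the map $X_A\to X_B$ we want: it is a sliding block code of finite memory and anticipation, it is a bijection (the alternating sequence determines both the $A$-path and the $B$-path), and its inverse is the opposite regrouping, again a sliding block code. A bijective, bicontinuous, shift-commuting map is a conjugacy, so $\sigma_A$ and $\sigma_B$ are conjugate.

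For the ``only if'' direction, let $\varphi\colon X_A\to X_B$ be a conjugacy. The essential ingredient is Williams' decomposition theorem for edge shifts: after recoding, every conjugacy factors as a finite composition of elementary state splitting codes and their inverses, the state amalgamation codes. Granting this, it remains only to observe that an elementary state splitting induces an elementary shift equivalence of transition matrices --- if $H$ arises from the graph of $A$ by splitting each state according to a partition of its outgoing edges, then there are matrices $R,S$ over $\Z_{\geq 0}$, with $R$ the $\setof{0,1}$-valued ``division matrix'' recording the partition, such that $A=RS$ and the transition matrix of $H$ equals $SR$; the in-splitting case is entirely analogous. Since strong shift equivalence is a symmetric relation, the amalgamation codes (inverses of splittings) contribute elementary shift equivalences as well, and chaining these along the factorization of $\varphi$ exhibits $A$ and $B$ as strongly shift equivalent.

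The main obstacle is precisely the decomposition theorem used in the last step. An arbitrary conjugacy is a priori only a sliding block code of some memory and anticipation, and reducing it to the elementary moves first requires passing to higher block presentations --- themselves special cases of state splitting --- to arrange that $\varphi$ and $\varphi^{-1}$ are $1$-block maps, and then a delicate combinatorial argument that a $1$-block conjugacy with $1$-block inverse is a composition of single splittings and amalgamations. One must also keep track, throughout both directions, that all the matrices $R_i,S_i$ that appear are genuinely nonnegative integer matrices, as demanded by the definition of strong shift equivalence in force here.
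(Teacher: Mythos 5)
The paper does not prove this theorem: it is quoted from Williams (the citation \cite{williams}) and used as a black box, so there is no in-paper argument to compare yours against. Judged on its own, your outline is the standard proof and its structure is correct. The ``if'' direction --- refining each edge of the graph of $A=RS$ into an $R$-edge followed by an $S$-edge, reading the resulting alternating bipartite path with the opposite grouping as a path in the graph of $B=SR$, and checking this regrouping is a sliding block code with sliding block inverse --- is complete in all essentials, and you correctly reduce to a single elementary equivalence by transitivity of conjugacy. The ``only if'' direction, however, rests entirely on the Decomposition Theorem (every conjugacy of edge shifts factors through higher-block recodings into elementary in/out-splittings and amalgamations), which you explicitly grant rather than prove. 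Since that is where essentially all of the difficulty of Williams' theorem lives --- the passage from an arbitrary sliding block conjugacy to a chain of elementary moves --- what you have is an accurate map of the proof rather than a self-contained one; to close it you would need to supply that combinatorial argument or cite it as the paper cites the whole theorem. Two smaller points worth a sentence each in a final write-up: the theorem here is stated for transition matrices of directed graphs, i.e.\ $\{0,1\}$ matrices and vertex shifts, whereas you work with edge shifts of general nonnegative integer matrices (harmless, since the presentations are conjugate and the intermediate matrices in a strong shift equivalence chain need not be $\{0,1\}$ in any case, but it should be said); and you are right to insist that the $R_i,S_i$ be nonnegative integer matrices, since the paper's definition of strong shift equivalence omits that hypothesis even though the theorem is false without it.
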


Theorem~\ref{thm:shifteqconj} allows us to prove that two subshifts are conjugate by a series of simple matrix computations.  Finding matrices that give a strong shift equivalence, however, can be a very difficult problem.  Two methods of finding such equivalences are given in~\cite{coding}: state splitting, where a single vertex is split into two, or state amalgamation, where two vertices are combined into one.  In graph-theoretic terms, amalgamating two vertices is equivalent to contracting them together.  In general obtaining the smallest element of a strong shift equivalence class may involve both splittings and amalgamations.  We instead focus on the simpler problem of obtaining $H$ only by amalgamating vertices in $G$.  This also has the advantage of producing a matrix that is more useful for our needs in this paper (see Section~\ref{sec:techniques}).

Let $A$ be the binary $n\times n$ transition matrix for $G$.  The following two conditions, adapted from~\cite{coding}, will allow us to amalgamate vertices $i$ and $j$.
\begin{eqnarray}
  \text{\bf Forward Condition:} & 
  A \vec e_i = A \vec e_j \text{ and } (\vec e_i^\tr  A) \cdot (\vec e_j^\tr A) = 0
  & \label{eq:fwd} \\
  \text{\bf Backward Condition:} &
  \vec e_i^\tr A = \vec e_j^\tr A \text{ and } (A \vec e_i) \cdot (A \vec e_j) = 0 
  & \label{eq:backwd}
\end{eqnarray}
Here $\vec e_i$ denotes the column vector with a 1 in position $i$ and zeros elsewhere.  From a graph-theoretic or dynamical systems point of view, the forward condition says that $i$ and $j$ have the same image but disjoint preimages, and the backward condition says they have the same preimage but disjoint images.  See Figure~\ref{fig:amalg-example} for an example.

\begin{figure}[ht]
  \centering

  \tikzstyle{fig}=[->,>=triangle 45,scale=0.4,node distance=1mm,semithick]
  \begin{tikzpicture}[fig]
 \tikzstyle{gen}=[fill=black!10,draw=black,shape=circle,minimum size=2pt]
    \tikzstyle{gen2}=[fill=blue!10,draw=blue,shape=circle,minimum size=2pt]

    \node[gen2] (a) at (2,0) {$a$};
    \node[gen] (b) at (0,3) {$b$};
    \node[gen2] (c) at (4,3) {$c$};
    
    \path
    (a) edge[loop below] (a)
        edge (b)
    (b) edge[bend left] (c)
    (c) edge[bend left] (b)
        edge (a)
    ;

    \node at (6,1.5) {\huge $\Rightarrow$};
  \end{tikzpicture}
  \hspace{10pt}
  \begin{tikzpicture}[fig]
    \tikzstyle{gen}=[fill=black!10,draw=black,shape=circle,minimum size=2pt]

    \node[gen] (b) at (0,3) {$b$};
    \node[gen] (ac) at (4,3) {$ac$};
    
    \path
    (ac) edge[loop below] (ac)
        edge[bend left] (b)
    (b) edge[bend left] (ac)
    ;

    \node at (0,0) {}
    ;
  \end{tikzpicture}
  \caption{A forward amalgamation}
  \label{fig:amalg-example}
\end{figure}
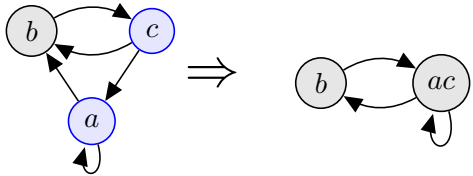

Note that the backward condition for $A$ is the same as the forward condition for $A^\tr$.  The following result allows us to reduce $A$ to a smaller $n-1$ by $n-1$ matrix $B$ if either of these conditions are satisfied for some pair of vertices.  See e.g.~\cite[\S 2]{coding} for a proof.

\begin{theorem}
  \label{thm:amalg}
  If $i$ and $j$ satisfy the forward condition \eqref{eq:fwd} or backward condition \eqref{eq:backwd} for a transition matrix $A$, then there is an elementary shift equivalence from $A$ to the matrix obtained by amalgamating $i$ and $j$.
\end{theorem}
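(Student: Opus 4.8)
The plan is to prove the forward-condition case directly by exhibiting the pair $(R,S)$ explicitly; the backward case then follows by applying the forward case to $A\trans$, since (as the paper notes) the backward condition for $A$ is the forward condition for $A\trans$, and an elementary shift equivalence $(R,S):A\trans\to C\trans$ transposes to $(S\trans,R\trans):A\to C$. So assume vertices $i$ and $j$ satisfy \eqref{eq:fwd} for the $n\times n$ matrix $A$: $A\vec e_i = A\vec e_j$ (same out-neighborhoods) and $(\vec e_i\trans A)\cdot(\vec e_j\trans A)=0$ (disjoint in-neighborhoods). Let $B$ be the $(n-1)\times(n-1)$ matrix obtained by amalgamating $i$ and $j$ into a single vertex, call it $\ast$; concretely, delete row/column $j$ and replace row $i$ by the (entrywise) sum of the old rows $i$ and $j$, leaving column $i$ as the old column $i$ (which equals the old column $j$ by the first hypothesis, so this is unambiguous).

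The key step is to guess $R$ and $S$. I would take $S$ to be the $(n-1)\times n$ matrix that is essentially the ``identification'' map: its row indexed by a vertex $k\neq i,j$ is $\vec e_k\trans$, and its row indexed by $\ast$ is $\vec e_i\trans + \vec e_j\trans$. Dually, take $R$ to be the $n\times(n-1)$ matrix whose column indexed by $k\neq i,j$ is the restriction of $A\vec e_k$ to the coordinates other than $j$ (i.e.\ $A$ with row $j$ deleted, columns restricted to non-$i,j$), and whose column indexed by $\ast$ is the restriction of $A\vec e_i$ (equivalently $A\vec e_j$) to coordinates other than $j$. In other words, if $P$ is the $n\times(n-1)$ ``injection with a merge'' matrix (columns $\vec e_k$ for $k\neq j$, with the $\ast$-column being $\vec e_i$) and $Q$ is the $(n-1)\times n$ matrix deleting row $j$, then one natural candidate is $S = $ (row-merge) and $R = QA P'$ for the appropriate bookkeeping matrix $P'$; I will pin down the precise $0/1$ forms so that the two products below come out right.

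The verification then has two parts. First, $SR = B$: this is a direct computation showing that merging the rows of $A$ for $i,j$ and collapsing column $j$ into column $i$ reproduces $B$; the first hypothesis $A\vec e_i = A\vec e_j$ is exactly what makes the column-collapse well-defined and makes $B$ a $0/1$ matrix. Second, $RS = A$: here is where the disjoint-preimage hypothesis $(\vec e_i\trans A)\cdot(\vec e_j\trans A)=0$ is essential. Expanding $RS$, the only entries that could fail to reproduce $A$ are those in rows $i$ and $j$, where the merge in $S$ re-distributes the $\ast$-column of $R$; the disjointness of the in-neighborhoods of $i$ and $j$ guarantees that for each source vertex $k$, at most one of the edges $k\to i$, $k\to j$ is present, so no ``cross terms'' appear and $RS$ agrees with $A$ entrywise (in particular $RS$ is again $0/1$). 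I would check a couple of representative entries (one in row $i$, one in a generic row $k\neq i,j$, one in column $i$) and then assert the general pattern.

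The main obstacle is purely bookkeeping: choosing the index conventions for $R$ and $S$ so that both $RS=A$ and $SR=B$ hold simultaneously with $R,S$ having nonnegative (indeed $0/1$) integer entries, rather than getting one product right and the other off by a permutation or a merged row in the wrong place. There is no deep difficulty once the hypotheses are in hand — the forward condition is precisely engineered to kill the obstruction to $RS=A$ — but it is easy to write down a near-miss. I would therefore fix a small running example (e.g.\ Figure~\ref{fig:amalg-example}) to calibrate the conventions, state $R$ and $S$ block-wise with the vertex $\ast$ and the surviving vertices $k\neq i,j$ as explicit index blocks, and then carry out the two matrix multiplications block-by-block, invoking $A\vec e_i=A\vec e_j$ for the $SR$ computation and the disjoint-preimage identity for the $RS$ computation. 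Finally, I note that the matrix $B$ produced this way is exactly the ``amalgamated'' transition matrix, and by Definition (strong shift equivalence) together with Theorem~\ref{thm:shifteqconj} this also re-proves that $(X_A,\sigma_A)$ and $(X_B,\sigma_B)$ are conjugate, recovering the classical state-amalgamation result of \cite{coding} by a one-step argument.
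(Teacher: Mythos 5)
Your overall strategy is the same as the paper's: for the forward case you exhibit the pair explicitly, with $S$ the ``identification'' matrix whose merged row is $\vec e_i\trans+\vec e_j\trans$ (the paper's $Y$) and $R$ essentially $A$ with column $j$ deleted and the merged column equal to the common column $A\vec e_i=A\vec e_j$ (the paper's $AX$); the backward case is then obtained by transposing, exactly as in the paper. However, the verification you sketch has the roles of the two halves of the forward condition \eqref{eq:fwd} swapped, and the step you flag as the crux --- ``disjointness kills the cross terms in $RS$'' --- is not the argument that actually closes. With this choice of $S$, every column of $S$ has exactly one nonzero entry (column $\ell$ is $\vec e_{[\ell]}$, where $[\ell]$ is the class of $\ell$), so $(RS)_{k\ell}=R_{k,[\ell]}$ with no sum at all: cross terms never arise, regardless of any hypothesis. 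What makes $RS=A$ true is that the merged column of $R$ must simultaneously equal column $i$ and column $j$ of $A$, i.e.\ precisely the first hypothesis $A\vec e_i=A\vec e_j$; this is how the paper argues, via $AXY\vec e_j=A\vec e_i=A\vec e_j$. The disjoint-preimage hypothesis $(\vec e_i\trans A)\cdot(\vec e_j\trans A)=0$ is needed in the \emph{other} product: the merged row of $SR$ is the entrywise sum of rows $i$ and $j$ of $A$ (with column $j$ deleted), and disjointness is exactly what keeps that sum in $\{0,1\}$ so that $B=SR$ is a legitimate transition matrix. As written, your check of $RS=A$ proves nothing, and your claim that the first hypothesis is what makes $B$ a $0/1$ matrix is also incorrect; both are repairable, since the hypothesis you actually need in each place is available, but the sketch would not survive the block computation you defer.

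A smaller issue, of the bookkeeping type you yourself warn about: your description of $R$ is dimensionally inconsistent. The columns of an $n\times(n-1)$ matrix are vectors of length $n$, but ``the restriction of $A\vec e_k$ to the coordinates other than $j$'' has length $n-1$ (and the parenthetical ``$A$ with row $j$ deleted, columns restricted to non-$i,j$'' describes an $(n-1)\times(n-2)$ block, not the right shape). The correct $R$ deletes a \emph{column} of $A$, not a row: its columns are the full columns $A\vec e_k$ for $k\neq j$, with the merged column being $A\vec e_i$. Once $R$ and $S$ are pinned down this way and the hypotheses are applied to the correct products, the argument is the paper's proof.
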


By applying Theorem~\ref{thm:amalg} repeatedly, as long as there exist $i,j$ satisfying either the forward or backward condition, one can reduce $A$ to a much smaller representative of its strong shift equivalence class.  The resulting matrix $B$ at the end of this process corresponds to a subshift $(X_H,\sigma_H)$ which is therefore conjugate to $(X_G,\sigma_G)$.

\begin{algorithm}[!ht]
\caption{{\tt simplify\_subshift}: Amalgamating a subshift of finite type}
\begin{algorithmic}
  \STATE Input: subshift $T \in \{0,1\}^{n\times n}$, number of trials $K$
  \STATE $T_\text{min} \gets T$
  \FOR{$k$ from 1 to $K$}
  \STATE $\pi \gets {\tt random\_permutation}(n)$
  \STATE $T_\pi \gets T(\pi,\pi)$\quad \COMMENT{relabel vertices}
  \REPEAT
  \STATE {\tt amalgamated} $\gets$ {\tt false}
  \FOR{$(i,j) \in E(T_\pi)$ ordered lexographically}
  \IF{conditions~\eqref{eq:fwd} or~\eqref{eq:backwd} hold for $A=T_\pi$}
  \STATE $T_\pi\gets$ {\tt amalgamate}$(T_\pi,i,j)$
  \STATE {\tt amalgamated} $\gets$ {\tt true}
  \STATE {\bf break for}
  \ENDIF
  \ENDFOR
  \UNTIL{{\bf not} {\tt amalgamated}} \quad \COMMENT{no further amalgamations}
  \IF{${\tt size}(T_\pi) < {\tt size}(T_\text{min})$}
  \STATE $T_\text{min} \gets T_\pi$
  \ENDIF
  \ENDFOR
  \STATE Output: $T_\text{min}$
\end{algorithmic}
\label{alg:amalg}
\end{algorithm}

For small enough matrices it is feasible to perform a simple brute-force search to find the smallest $B$ which can be obtained from $A$ via amalgamations, but we would like a more efficient algorithm for larger matrices.  Unfortunately, it is shown in~\cite{amalg} that it is NP-hard (computationally intractable) to find an ordering of amalgamations which yields the smallest representative.  In light of this hardness, we use the procedure outlined in Algorithm~\ref{alg:amalg}, which is a essentially a randomized greedy algorithm performed many times.  Taking the number of trials $k$ to be about $n^2$ will typically give a reasonable approximation factor, meaning that if $m$ amalgamations are possible, the algorithm will perform roughly $m/3$ or $m/2$ amalgamations.  It remains an open question whether an algorithm exists which has a provable approximation guarantee.

\section{Techniques} \label{sec:techniques}

Given a continuous map $f:X\to X$, we will apply discrete Conley index theory to compute subshifts of finite type to which $f$ is semi-conjugate.  This is the approach of the DFT method~\cite{dft}, which we describe first in Section~\ref{sec:dft}.  We also add some new tools which we present in Sections~\ref{sec:reducing} and~\ref{sec:scaling}.  Finally, we discuss implementation and efficiency details in Section~\ref{sec:implementation}.

\subsection{The DFT method} \label{sec:dft}

To obtain lower bounds on topological entropy, we compute a semi-conjugate symbolic dynamical system using the DFT method, which is based on the discrete Conley index.  We use this method largely as a black box, and hence give only a rough sketch of the technique here and refer the reader to Day et al.~\cite{dft} for the full details.  At a high level, the method consists of three main steps:
\begin{enumerate}[I.]\setlength{\itemsep}{0pt}
\item Discretize a compact subset of the phase space by constructing a grid $\G$ of boxes, and compute a rigorous enclosure for the dynamics on these boxes as described in Section~\ref{sec:combinatorial}.
\item Find a combinatorial index pair from these boxes and determine the Conley index for this pair using~\cite{homcubes}.
\item From the index, compute a subshift to which the original system is semi-conjugate.\label{item:dft-subshift}
\end{enumerate}
This approach is very general, and in principle could be applied to systems of arbitrary dimension.  A major benefit to using this method here, however, is that in our setting it is completely automated.  As long as one knows roughly where in the phase space the invariant objects of interest are, one can simply plug in the parameters and compute.  See Section~\ref{sec:implementation} for more details on the implementation.

An advantage to studying hyperbolic parameters of H\'{e}non map~\eqref{eq:henon-intro} for $|b| \leq 1$ is that the nonwandering set is disconnected.  This follows from Plykin theory, as discussed in~\cite[\S 7.9]{robinson}, from which we know that any connected trapping region (a region $N$ with $f(N) \subseteq \text{int}(N)$) of the attractor has at least three holes.  Considering a disc covering such a hole, we see that since the image each hole must strictly cover another hole, an iterate of this disk must eventually expand, which contradicts the area-preserving or area-shrinking of the maps we are considering.  Thus, all such trapping regions must be disconnected.

Since the nonwandering set is disconnected, we can bypass much of the complication in the second step of the DFT method, that of finding an index pair.  This is because the invariant set will be naturally isolated; at a fine enough resolution, the collection of boxes that cover the invariant set will already be separated into disjoint regions.

\subsection{Reducing large subshifts} \label{sec:reducing}

When applying the DFT method to complex systems, the resulting semi-conjugate subshift (step~\ref{item:dft-subshift} above) is often very large.  In fact, the main result in~\cite{dft} is shown via a subshift with 247 symbols.  While such large subshifts provide a topological entropy bound and information about the number of orbits of a given period, they often carry little \emph{intuition} about the underlying structure of the dynamics.  The question then becomes, how can we distill more useful information from these large subshifts to get a more intuitive understanding of the system?

The answer we propose here is to simplify the resulting subshift using amalgamations, as described in Section~\ref{sec:shifteq}.  Specifically, we add a final step to the DFT method, where we run the semi-conjugate subshift obtained through Algorithm~\ref{alg:amalg}.  As we will see in Section~\ref{sec:results}, in practice this procedure can greatly reduce the number of symbols required to describe the system.  In some cases this simplification reveals a simple underlying structure, such as connections between a handful of low-period orbits, which would otherwise be difficult to glean from the 200+ symbols of the original subshift.  This simplification is also useful in comparing our results to previous work and conjectures; using Algorithm~\ref{alg:amalg}, one can attempt to amalgamate our computed subshift $A$ to yield a target subshift $B$.  We apply this technique in Section~\ref{sec:ap} to partially confirm a conjecture of Davis et al.~\cite{mackay}; see Figure~\ref{fig:dms-amalg} in particular.

As a final note, recall that the subshift generated by the DFT method has a useful geometric interpretation: from~\cite{dft} we know that we can associate a region $N_i$ of the phase space to each symbol $s_i$ of the subshift $A$, such that any trajectory $(\ldots, s_{-1}, s_0, s_1, \ldots)$ in $A$ corresponds to a trajectory in the original system through the regions $(\ldots, N_{-1}, \allowbreak N_0, \allowbreak N_1, \ldots)$.\footnote{In fact, these regions are connected components of the maximal invariant set of the index pair.}  Fortunately, amalgamation (and thus Algorithm~\ref{alg:amalg}) preserves this property in the following sense.  If $A'$ is derived from $A$ by a sequence of amalgamations, each symbol $s_i'$ of $A'$ can be expressed as an amalgamation of symbols of $A$.  Taking $N_i'$ to be the union of the regions corresponding to these symbols, it is easy to see that the same trajectory property holds for $A'$.

\subsection{Robustness and scaling} \label{sec:scaling}
\begin{figure}
  \centering
  \subfigure[center][Aspect ratio: entropy vs. $\log(w_2/w_1)$ \protect\newline Depths 9 (blue) and 10 (green)]{
    \includegraphics[width = 0.48\textwidth]{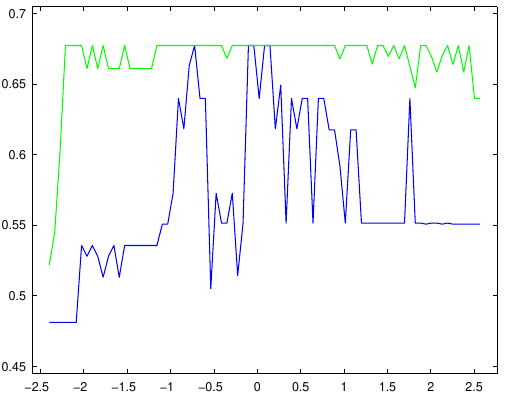}
    \label{fig:ratio-scale}
  }
  \subfigure[Resolution: entropy vs. $-\log(w_1w_2)$]{
    \includegraphics[width = 0.47\textwidth]{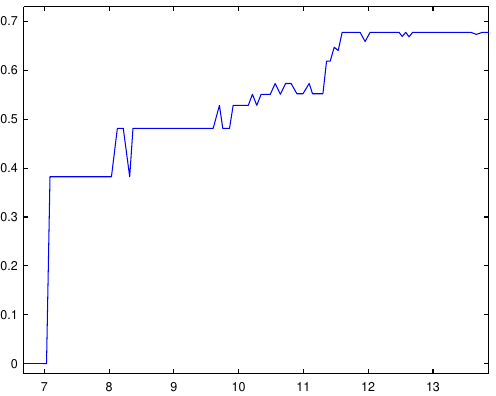}
    \label{fig:area-scale}
  }
  \vspace{-10pt}
  \caption{Sensitivity of the entropy bound with respect to the aspect ratio (a) and the area (b) of the boxes.  The map used was H\'enon~\eqref{eq:henon-intro} with parameters (5.685974, -1), or plateau 14 from Section~\ref{sec:ap}.}
  \label{fig:scaling}
\end{figure}

A natural concern for any approach which involves discretizing the phase space is the robustness of the method with respect to the choice of discretization.  In our setting, this discretization is determined by the resolution (depth), aspect ratio,\footnote{We will use this term to refer to the shape of the boxes in higher dimensions as well.} and the precise placement of the grid $\G$.

Ideally, slight changes in these scaling parameters would have at most minor effects on the resulting subshift or entropy bound, but there are several reasons why this is unreasonable to expect.  Perhaps most obvious is that a subshift of finite type, and even the Conley index itself, is a discrete object, and one cannot expect any sort of continuity in the scaling parameters when the output itself is discrete.  Moreover, the combinatorial isolation of the index pair depends on the topological properties of rigorous numerical bounds on the images of boxes, which can be very sensitive to the precise grid parameters.  A simple example is in achieving disjoint regions; when trying to separate regions $A$ and $B$ of the phase space with $\text{dist}(A,B) = d$, using a grid whose boxes $\B_i$ have width $y_k^i - x_k^i = 4d$ in each dimension $k$, great care must be taken in placing the grid, or the entire index pair could degenerate.  More subtle is a situation where the combinatorial image of $A$ does not overlap $B$, but does after a slight shift or rescaling of the grid.  These issues raise the question of the practical robustness of the DFT method with respect to these scaling parameters.

To measure this robustness, we plot the computed entropy lower bound for a particular H\'enon map against the area and the aspect ratio of the boxes in Figure~\ref{fig:scaling}.  There we define $w_1 = y^i_1 - x^i_1$ and $w_2 = y^i_2 - x^i_2$ to be the width of a box $\B_i$ in dimensions 1 and 2, respectively (recall that in our setting these are independent of $i$).  The grid resolution may be defined as $-\tfrac 1 2\log_2 w_1w_2$, or more generally as $-\tfrac 1 n\log_2 V(\B_i)$, where $V$ denotes the $n$-dimensional volume.  We choose the this formula so that after normalizing by the volume of the initial box $\B$, the notions of resolution and depth align.

Of course, the DFT method may behave very differently on other maps, but the behavior shown is quite typical: the entropy lower bound is roughly monotone increasing with respect to the resolution, and (very) roughly unimodal with respect to the aspect ratio.  This behavior is not surprising; the error from discretization decreases with the box area, and both extremes of the aspect ratio (i.e. $w_1 >\!> w_2$ and $w_2 >\!> w_1$) should result in essentially 1-dimensional information and hence a trivial entropy lower bound for most maps.

While the DFT method appears to be relatively robust with respect to the grid resolution, Figure~\ref{fig:ratio-scale} clearly shows a high sensitivity to the aspect ratio.  Specifically, small changes in the aspect ratio at depth 9 resulted in large jumps in the entropy lower bound, even when close to the optimal ratio.  While the behavior at depth 10 is somewhat more typical, this sensitivity is still something to keep in mind.  In particular, for the sake of replication, care should be taken when altering and storing the aspect ratio.\footnote{To find an appropriate aspect ratio for a given map, one can perform a scaling parameter exploration at a lower depth, similar to the one in Figure~\ref{fig:ratio-scale}.}\textsuperscript{,}\footnote{Note also that one can scale a map in a non-constant fashion to better align it with the grid.  Consider the map $f(x,y) = \bigl( (a - (\alpha^{y} x)^2 + b y) \alpha^{-\alpha^{y} x},\; \alpha^{y} x \bigr)$, which is just the H\'enon map~\eqref{eq:henon-intro} conjugated by $g(x,y) = (\alpha^{y} x, y)$, and thus for large enough $\alpha$ would be more aptly analyzed by first conjugating by $g^{-1}$.  More generally, such non-constant scaling may be used to focus on areas af the phase space where isolation is more difficult.}

As mentioned above, the DFT method gives an entropy bound which is roughly monotone increasing with the resolution.  Similar plots shown in~\cite{dft} suggested monotonicity, but had very few data points; it is likely that a continuous resolution scaling would fill in these plots to reveal a rough monotonicity in that case as well.
This monotonicity is of course beneficial behavior, as one would like the precision of the bound to increase with the precision of the box covering, but it is especially useful in hyperbolic settings.  By~\cite[Theorem~9.6.1]{robinson}, hyperbolic systems admit a finite Markov partition of the invariant set, and since the nonwandering set is disconnected (see Section~\ref{sec:dft}), in theory this partition is obtainable when the boxes are small enough.  Thus we expect to obtain the true entropy value at a high enough resolution, and by seeing where the entropy levels off we can be confident, though not certain, that our lower bound is the actual value.  We will apply this intuition in Section~\ref{sec:results}.

\subsection{Implementation and efficiency} \label{sec:implementation}

The computations in this paper were performed in Matlab, using Intlab~\cite{intlab}, GAIO~\cite{gaio}, and CHomP~\cite{chomp}.  The machines used had memory between 1 and 2 gigabytes and clock speeds between 1 and 2.2 gigahertz.  The runtimes varied, as we will describe below, but ranged from a few seconds at low grid resolutions to several hours for high resolutions.

As mentioned above, it is useful to observe how the entropy bound changes as we increase the grid resolution, but of course this procedure is not without cost.  One would expect the runtime of the bound computation to increase, perhaps dramatically, with the resolution of the discretization.  Empirically, the running time seems to grow very roughly as $n^r$ where $n$ is the dimension of the invariant set, and $r$ is the resolution of the grid, as defined above.

Fortunately, as we have mentioned several times, the DFT method and the new techniques we add here are all completely automated.  By this we mean that one needs only to specify the map and the parameters to be studied (and the discretization parameters discussed above), and the rest of the computation, all the way to the semi-conjugate subshift, is done without any further human action or input.  Thus, although the computations may be time-consuming for high resolutions, it is computation time, not human time.

This lack of manual intervention enables vast explorations of map and discretization parameters.  For this paper, a total of 58 parameter values of the H\'enon map were studied: 43 in Section~\ref{sec:goodplats} and 15 in Section~\ref{sec:ap}.  For each of the 58 parameter values, we computed entropy bounds at 64 or 72 different resolutions, yielding a total of over 3900 separate computations.  This exploration would have been infeasible without complete automation (or a large team of researchers).

\begin{figure}
  \centering
  \vspace{-10pt}
  \includegraphics[width = 0.8\textwidth]{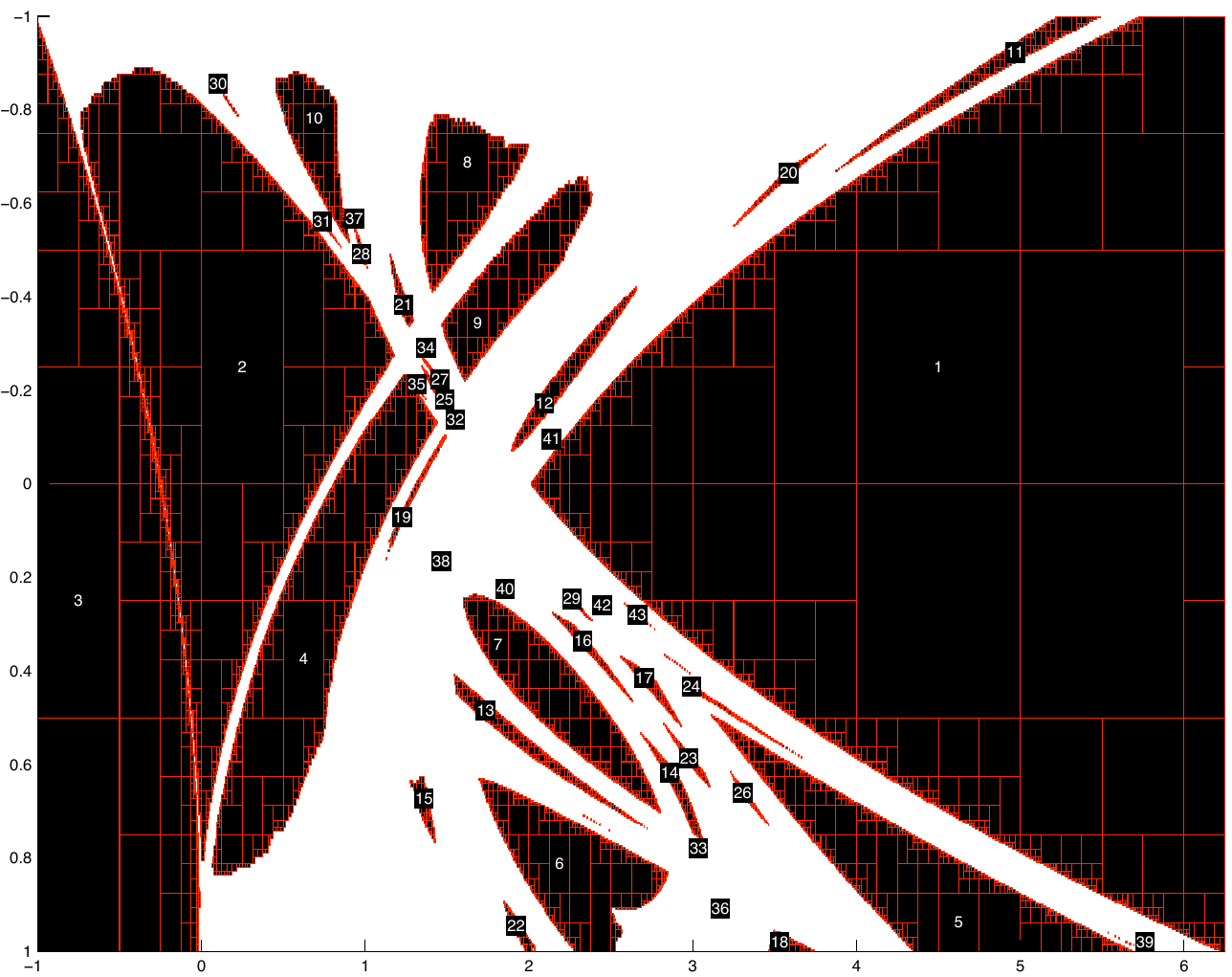}
  \vspace{-10pt}
  \caption{Hyperbolic plateaus for H\'{e}non from~\cite{zinplats}, with $a$ on the horizontal axis and $b$ on the vertical axis.  The label for each plateau is centered over the parameter values used to represent the plateau.}
  \label{fig:plat-nums}
\end{figure}

\begin{figure}
  \centering
  \vspace{-10pt}
  \includegraphics[width = 0.8\textwidth]{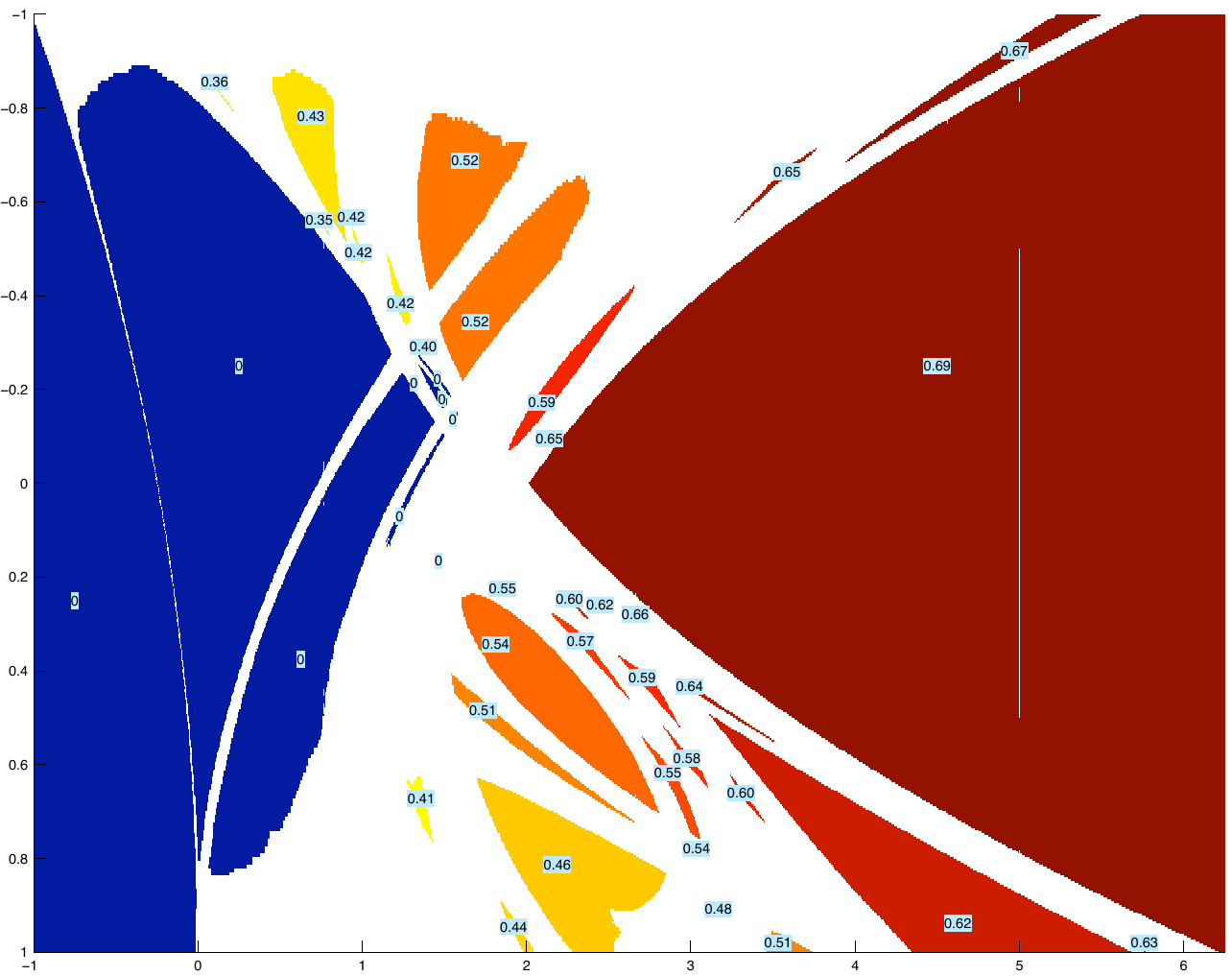}
  \vspace{-10pt}
  \caption{Rigorous lower bounds for topological entropy for the hyperbolic plateaus labeled 1 through 43 in Figure~\ref{fig:plat-nums}.}
  \label{fig:plat-ents-flat}
\end{figure}

\begin{figure}
  \centering
  \includegraphics[width = 0.75\textwidth]{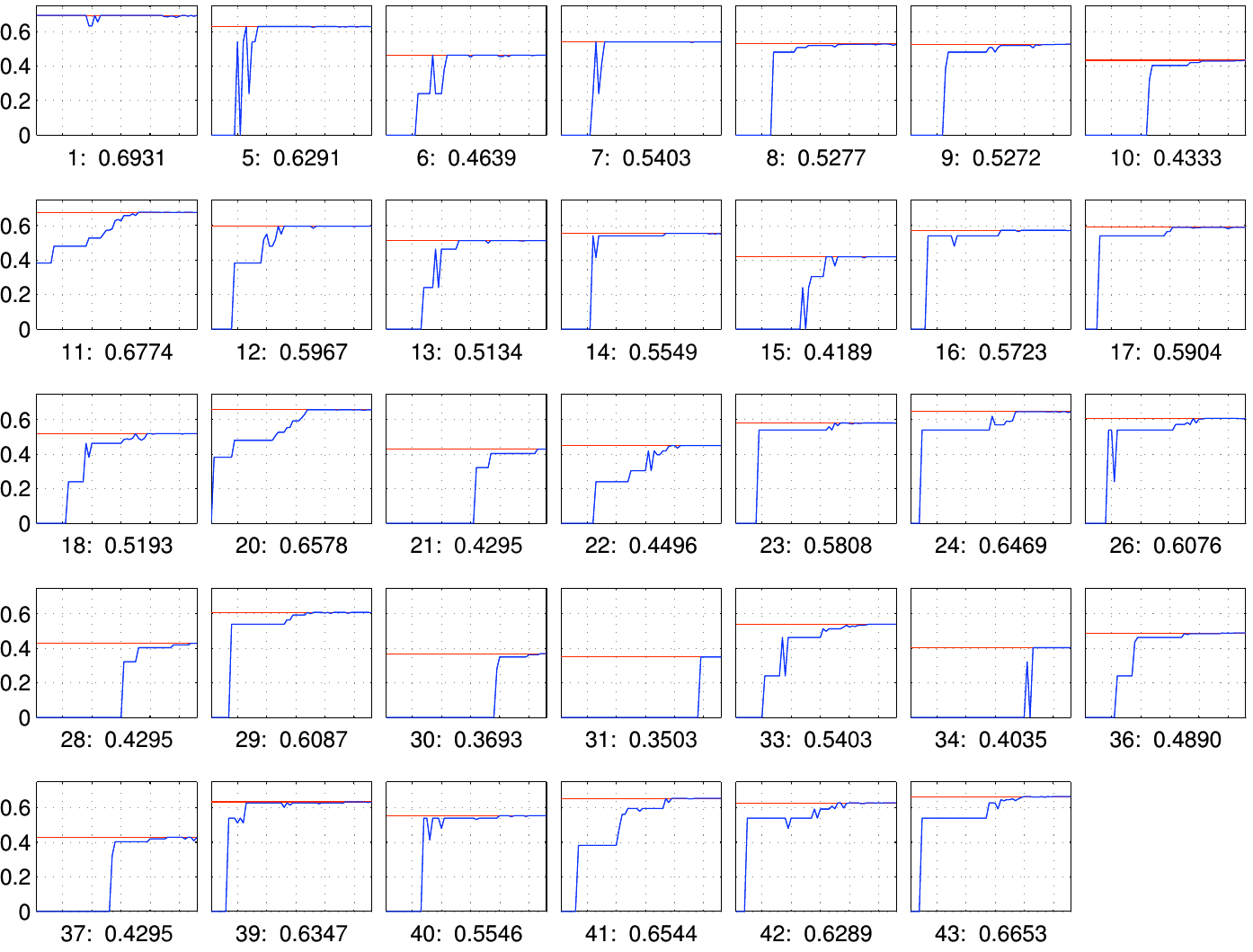}
  \caption{Entropy versus resolution.  The red horizontal line in each represents the maximum lower bound computed (also printed below each plot).  Plots are omitted for 0-entropy plateaus (2, 3, 4, 19, 25, 27, 32, 35, 38).}
  \label{fig:ent-vs-res-2d}
\end{figure}

\section{Hyperbolic plateaus of H\'{e}non} \label{sec:goodplats} \label{sec:results}
We now apply the methods outlined in Section~\ref{sec:techniques} to the real-valued H\'{e}non map
\begin{equation*}
  f_{a,b}(x,y) = (a-x^2+by, x)
  \label{eq:henon}
\end{equation*}
for parameter values $(a,b)$ such that $f_{a,b}$ is (uniformly) hyperbolic.  Note that this \emph{excludes} the classical parameters $(1.4, 0.3)$; for rigorous topological lower bounds in the classical case, see~\cite{dft} and~\cite{newhouse}.

Using the hyperbolic plateaus of Arai, we select representative parameter values to study for each plateau, as shown in Figure~\ref{fig:plat-nums}.  For each plateau, we use the continuous resolution scaling approach mentioned in Section~\ref{sec:scaling}, to get a sense of how close our bounds might be to the actual values.  The entropy bounds we compute constitute our main result, summarized in Theorem~\ref{thm:mainresult}.

\begin{theorem}
  \label{thm:mainresult}
  \vbox{Let $F_i = \{f_{a,b}\:|\:(a,b)\in R_i\}$, where $R_i$ is the $i$th plateau in Figure~\ref{fig:plat-nums}.  Then for all $i$ and all $f\in F_i$ we have $h(f) \geq h_i$, where the $h_i$ are defined below:}\nobreak
  \begin{displaymath}
    \begin{matrix}
      h_{1}  = 0.6931 & h_{5}  = 0.6291 & h_{6}  = 0.4639 & h_{7}  = 0.5403 & h_{8}  = 0.5277 \\
      h_{9}  = 0.5270  & h_{10} = 0.4333 & h_{11} = 0.6774 & h_{12} = 0.5967 & h_{13} = 0.5134 \\
      h_{14} = 0.5549 & h_{15} = 0.4189 & h_{16} = 0.5723 & h_{17} = 0.5904 & h_{18} = 0.5193 \\
      h_{20} = 0.6578 & h_{21} = 0.4295 & h_{22} = 0.4496 & h_{23} = 0.5808 & h_{24} = 0.6469 \\
      h_{26} = 0.6076 & h_{28} = 0.4295 & h_{29} = 0.6087 & h_{30} = 0.3693 & h_{31} = 0.3503 \\
      h_{33} = 0.5403 & h_{34} = 0.4035 & h_{36} = 0.4890 & h_{37} = 0.4295 & h_{39} = 0.6347 \\
      h_{40} = 0.5546 & h_{41} = 0.6544 & h_{42} = 0.6289 & h_{43} = 0.6653.
    \end{matrix}
  \end{displaymath}

\end{theorem}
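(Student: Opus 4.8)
\section*{Proof proposal}

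The statement is fundamentally a report of rigorous computer-assisted computations, so the plan is to explain why the pipeline of Section~\ref{sec:technique} certifies each inequality $h(f)\ge h_i$. First I would fix $i$ and let $(a_i,b_i)\in R_i$ be the representative parameter value indicated in Figure~\ref{fig:plat-nums}. Since Arai's method in~\cite{zinplats} certifies that $f_{a_i,b_i}$ is uniformly hyperbolic and $R_i$ is a hyperbolic plateau, structural stability gives that every $f\in F_i$ is conjugate to $f_{a_i,b_i}$; hence by Theorem~\ref{thm:topentsemiconj} (applied in both directions) it suffices to prove $h(f_{a_i,b_i})\ge h_i$ for the single representative.

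Next I would run the three-step DFT procedure on $f_{a_i,b_i}$. Step one: overlay a grid of boxes on a fixed initial box $B_0$ enclosing the nonwandering set and, using interval arithmetic, compute a rigorous outer enclosure of the image of each box. Step two: because $|b_i|\le 1$ forces the nonwandering set to be disconnected (via the Plykin-theoretic argument recalled in Section~\ref{sec:technique}), at sufficiently fine resolution the box covering of the invariant set splits into naturally isolated pieces, so an index pair is obtained essentially for free, and its Conley index is computed with the homology software of~\cite{homcubes}. Step three: from the index data extract a directed graph $G$ whose subshift $(X_G,\sigma_G)$ is semi-conjugate to $f_{a_i,b_i}$ on the invariant set, so that $h(f_{a_i,b_i})\ge h(\sigma_G)$ by Theorem~\ref{thm:topentsemiconj}. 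I would then repeatedly apply Theorem~\ref{thm:amalg}: whenever two vertices of the current graph satisfy the forward condition~\eqref{eq:fwd} or the backward condition~\eqref{eq:backwd}, amalgamate them. By Theorem~\ref{thm:shifteqconj} each amalgamation preserves the conjugacy class of the subshift, so the final graph $H$ satisfies $h(\sigma_G)=h(\sigma_H)$ while being small enough for a certified spectral computation. By the entropy formula for subshifts of finite type, $h(\sigma_H)=\log(\mathrm{sp}(A_H))$, and a rigorous lower bound on $\mathrm{sp}(A_H)$ — for instance by exhibiting a nonnegative vector $v$ and a rational $\lambda$ with $A_H v\ge\lambda v$ entrywise (Collatz--Wielandt), or by interval evaluation of the characteristic polynomial — yields a rigorous lower bound on $h(\sigma_H)$; rounding this down gives the tabulated $h_i$. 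Since the continuous-rescaling variant of the algorithm lets the whole computation be run at the many distinct resolutions mentioned in Section~\ref{sec:technique}, for each $i$ I would report the largest bound so obtained.

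The main obstacle is not any individual inequality — the chain $h(f)=h(f_{a_i,b_i})\ge h(\sigma_G)=h(\sigma_H)=\log(\mathrm{sp}(A_H))\ge h_i$ is immediate once its links are in place — but the rigor and bookkeeping of the computer-assisted parts: guaranteeing that the interval-arithmetic box enclosures are genuine over-approximations uniformly across all resolutions and all parameter values studied, that the Conley index and homology computations remain valid on the automatically generated (and possibly disconnected) index pairs, and that the amalgamation search, which is $NP$-hard in general by~\cite{amalg}, is nonetheless pushed by brute force to a graph $H$ small enough for the certified eigenvalue bound. Verifying these certified ingredients, rather than any symbolic-dynamics argument, is where the real work lies.
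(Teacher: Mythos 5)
Your proposal matches the paper's own proof: the paper likewise selects a representative $(a_i,b_i)\in R_i$, runs the DFT box-covering and Conley-index computation at many resolutions to get a semi-conjugate subshift and hence a rigorous lower bound, and then uses Arai's hyperbolicity of $R_i$ (structural stability plus Theorem~\ref{thm:topentsemiconj}) to propagate the best bound to the whole plateau. Your additional remarks on certified spectral-radius bounds and amalgamation are elaborations of the same pipeline, not a different route.
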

\begin{proof}
  For each $R_i$ we selected $(a_i,b_i)\in R_i$ as a representative (these choices are shown in Figure~\ref{fig:plat-nums}).  We then computed a combinatorial enclosure via interval arithmetic for $f_{a_i,b_i}$ at different resolutions, and for each resolution we computed a rigorous lower bound for topological entropy using the DFT method (see Section~\ref{sec:dft}).  These bounds are summarized in Figure~\ref{fig:ent-vs-res-2d}.  Finally, by~\cite{zinplats} we know each $R_i$ is (uniformly) hyperbolic and so for each $i$ we can apply the maximum lower bound achieved for $(a_i,b_i)$ to all of $R_i$.
\end{proof}

Figures~\ref{fig:plat-ents-3d} and~\ref{fig:plat-ents-flat} show an overview of our results.  Note that the entropy values shown are merely lower bounds, and not necessarily the true values.  Since we are computing these bounds for hyperbolic parameter values, however, we know from Section~\ref{sec:scaling} that if our entropy lower bound levels off as the grid resolution increases, we have strong evidence that we have obtained the correct value.  Typical index pairs from these computations are shown in Figure~\ref{fig:example-ips}, with colored regions corresponding to symbols in the resulting subshift after  amalgamation using Algorithm~\ref{alg:amalg}.

\begin{figure}[t]
  \centering
  \subfigure[Plateau 12]{
    \includegraphics[width = 0.45\textwidth]{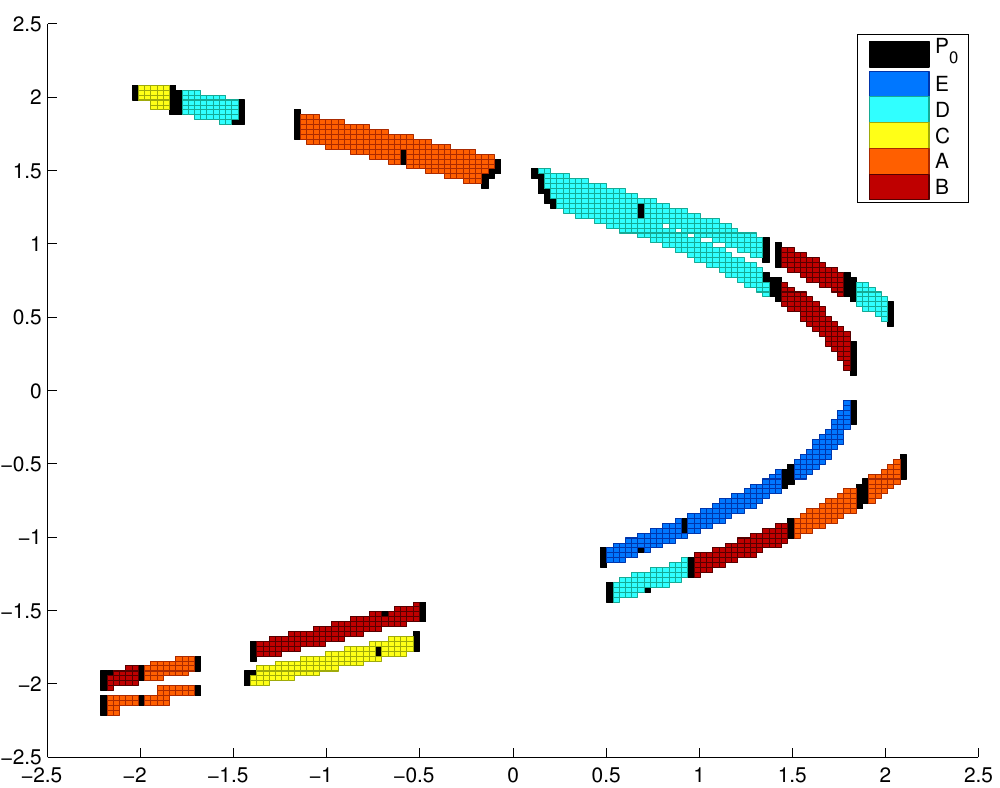}}
  \subfigure[Plateau 24]{
    \includegraphics[width = 0.45\textwidth]{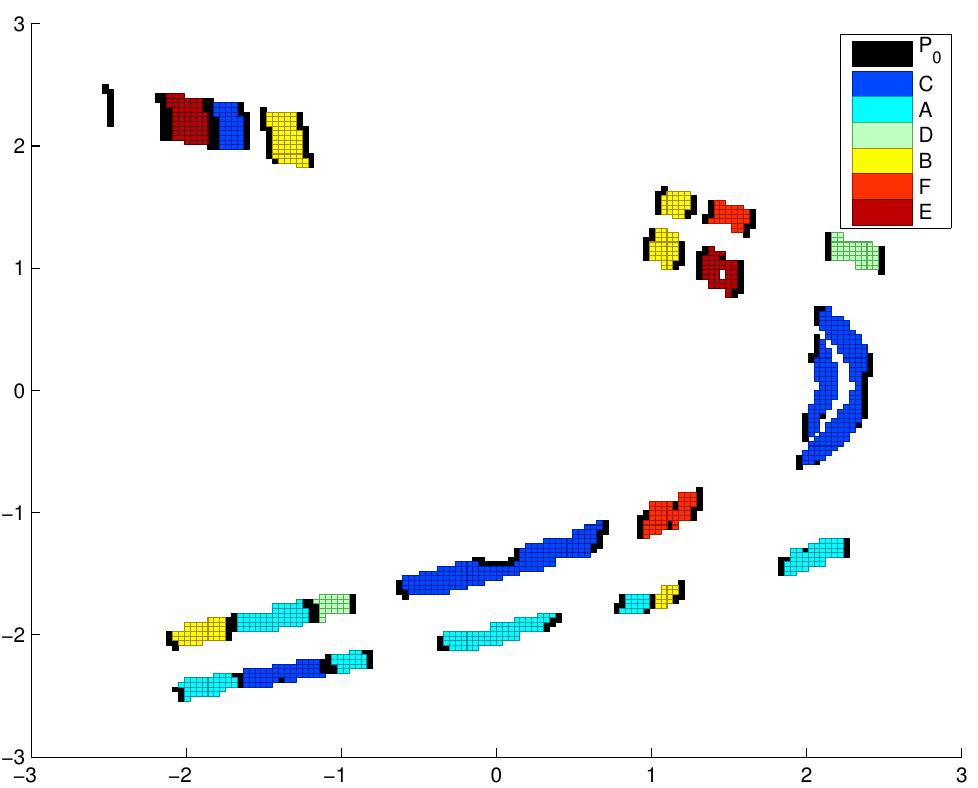}}
  \caption{Two index pairs from the computations.  The black is the set $P_0$ while the colors correspond to the amalgamated regions after applying Algorithm~\ref{alg:amalg}.}
  \label{fig:example-ips}
\end{figure}

In Figure~\ref{fig:ent-vs-res-2d}, we show plots of entropy bounds computed versus the resolution, for each of the 43 parameter values with a nonzero bound.  Using the above heuristic, it seems that for most of the plateaus, the bounds we computed should be exact or very close.  A few notable exceptions are plateaus 9, 10, 21, 28, 30, and 31, since the plots do not seem to have leveled off, and we would expect many of these bounds to improve with further computations.  For plateaus 33, 34, 36, 37, and 39, it is unclear whether the plot has leveled off.  As we saw in Section~\ref{sec:scaling}, the DFT method is fairly robust with respect to the grid resolution, and the entropy lower bound is roughly monotonic in the resolution.  The plots in Figure~\ref{fig:ent-vs-res-2d} reaffirm this, with very few exceptions.

While we have computed a large array of lower bounds, covering a vast portion of the parameter space of the H\'{e}non map, a recent result of Arai gives a method of rigorously computing \emph{exact} entropy values for (uniformly) hyperbolic H\'{e}non~\cite{zinloops}.  In fact, in that work he computes values for plateaus numbered 5, 7, 11, and 12 in Figure~\ref{fig:plat-nums}, which match our lower bounds precisely.  In that it computes exact entropy values, Arai's method is certainly superior to the DFT method in the case we study here, and indeed it would be very interesting to use his method to test the accuracy of our other lower bounds.  However, it is important to note that since Arai's method in~\cite{zinloops} relies heavily on hyperbolicity, it is not as generally applicable as the DFT method; in particular, it could not be directly applied to the H\'{e}non map for the classical parameter values, studied in~\cite{dft}, as the map is not uniformly hyperbolic for those parameters.

While much of the parameter space in Figure~\ref{fig:plat-ents-flat} is covered by our lower bounds, there is still much of the parameter space which is not hyperbolic, or has not yet been proven to be hyperbolic.  Thus, it remains in future work to lower-bound the entropy in the remaining white regions.  At first this seems like a daunting or impossible task, since in the nonhyperbolic regions, we no longer have plateaus of topological entropy, and thus cannot extend a bound from a single set of parameters to an entire region.  Fortunately, the DFT method can be applied to intervals of parameter values $[a_1, a_2]\times[b_1, b_2]$, yielding a single lower bound which applied to the entire interval, as demonstrated in~\cite{dft} and~\cite{std}.  Thus, it would be of great interest to compute lower bounds on an interval tiling of the H\'enon parameter space, and then compare these bounds to those computed here; as mentioned in Section~\ref{sec:implementation}, the automation of the DFT method would enable such parameter explorations.

\subsection{Area-preserving H\'{e}non maps} \label{sec:ap}

When $b=-1$, the H\'{e}non maps are area-preserving and orientation-preserving.  This case has been well studied, especially in the Physics literature.  Starting in 1991, Davis, MacKay, and Sannami (DMS)~\cite{mackay} conjectured that H\'{e}non was hyperbolic for three values of $a$ ($5.4$, $5.59$, $5.65$) and conjectured conjugacies to symbolic dynamics for these three values as well.  In 2002, de Carvalho and Hall in~\cite{howtoprune} replicated the results for $a = 5.4$ using a pruning approach.  In 2004, Hagiwara and Shudo in~\cite{prunehenon} used a different pruning method to replicate all three of the values that DMS studied, and two more.  They also give estimates of the topological entropy for $4 \leq a \leq 5.7$, which are displayed in Figure~\ref{fig:ap-ents}.  Finally in 2007 some rigorous results appeared by Arai in~\cite{zinplats}, where he proved that there are 16 hyperbolic regions for $b=-1$, covering the parameters studied by DMS and the two others studied by Hagiwara and Shudo.  Arai goes on in~\cite{zinloops} to prove that the subshift conjectured by DMS for $a=5.4$ is actually conjugate.

\newcommand{\tdms}{T_{\text{DMS}}} \newcommand{\tdft}{T_{\text{DFT}}}

While our method cannot prove exact topological entropy values or conjugacies, we can attempt to verify that the entropy of the subshifts given in~\cite{mackay} are lower bounds, and perhaps show that the subshifts themselves are semi-conjugate.  We focus first on the $a=5.4$ case, where Davis et al. conjectured that $f_a$ is conjugate to the subshift corresponding to the transition matrix $\tdms$ in Figure~\ref{fig:t-dms}, which has topological entropy $h(\tdms) \approx 0.6774$.  Note that this is the same plateau as plateau 11 from the previous section.

\begin{figure}
  \centering
  \subfigure[The matrix $\tdft$]{%
    \hspace{10pt}
    \includegraphics[width = 0.3\textwidth]{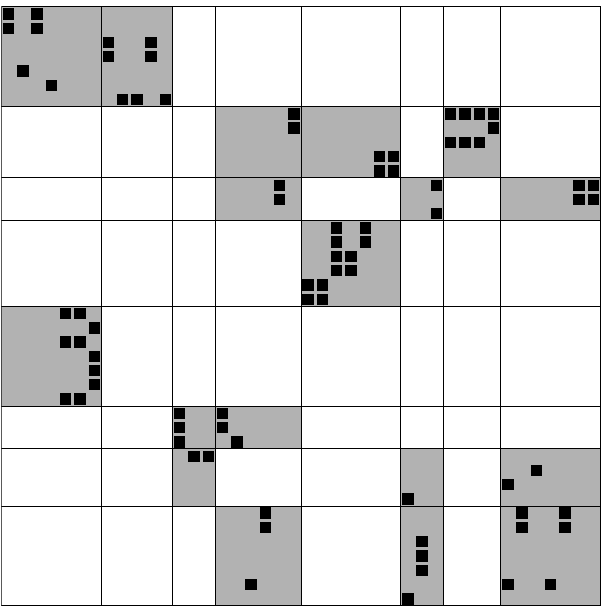}
    \hspace{10pt}
    \label{fig:t-dft}
  }
  \subfigure[The matrix $\tdms$ from~\cite{mackay}]{
    \newcommand{\0}{{ \color{Gray} 0 }}
    \newcommand{\1}{{ \mathbf{1} }}

    \raisebox{65pt}{\large
    \begin{math}
      \begin{bmatrix}
        \1 & \1 & \0 & \0 & \0 & \0 & \0 & \0     \\
        \0 & \0 & \0 & \1 & \1 & \0 & \1 & \0     \\
        \0 & \0 & \0 & \1 & \0 & \1 & \0 & \1     \\
        \0 & \0 & \0 & \0 & \1 & \0 & \0 & \0     \\
        \1 & \0 & \0 & \0 & \0 & \0 & \0 & \0     \\
        \0 & \0 & \1 & \1 & \0 & \0 & \0 & \0     \\
        \0 & \0 & \1 & \0 & \0 & \1 & \0 & \1     \\
        \0 & \0 & \0 & \1 & \0 & \1 & \0 & \1
      \end{bmatrix}
    \end{math}
    }
      % \kbordermatrix{
      %   & N_1 & N_2 & N_3 & N_4 & N_5 & N_6 & N_7 & N_8     \\
      %   N_1 & \1 & \1 & \0 & \0 & \0 & \0 & \0 & \0     \\
      %   N_2 & \0 & \0 & \0 & \1 & \1 & \0 & \1 & \0     \\
      %   N_3 & \0 & \0 & \0 & \1 & \0 & \1 & \0 & \1     \\
      %   N_4 & \0 & \0 & \0 & \0 & \1 & \0 & \0 & \0     \\
      %   N_5 & \1 & \0 & \0 & \0 & \0 & \0 & \0 & \0     \\
      %   N_6 & \0 & \0 & \1 & \1 & \0 & \0 & \0 & \0     \\
      %   N_7 & \0 & \0 & \1 & \0 & \0 & \1 & \0 & \1     \\
      %   N_8 & \0 & \0 & \0 & \1 & \0 & \1 & \0 & \1
      % }
    \label{fig:t-dms}
  }
  \caption{Amalgamation of the $42\times 42$ symbol matrix obtained using the DFT method.  The black squares in (a) denote the nonzero entries of $\tdft$ while the gray regions represent the amalgamated symbols, which one can easily see match $\tdms$ exactly.}
  \label{fig:dms-amalg}
\end{figure}

Using our technique, we prove that $f_{5.4}$ is semi-conjugate to a $42\times 42$ symbol matrix $\tdft$, depicted in Figure~\ref{fig:t-dft}.  The topological entropy of this matrix is the same value, $h(\tdft) \approx 0.6774$.  The fact that $h(\tdms)=h(\tdft)$ suggests that the subshifts given by $\tdms$ and $\tdft$ might be conjugate, and indeed we prove this in Theorem~\ref{thm:dms}.

\begin{figure}
  \centering
  \includegraphics[width = 0.6\textwidth]{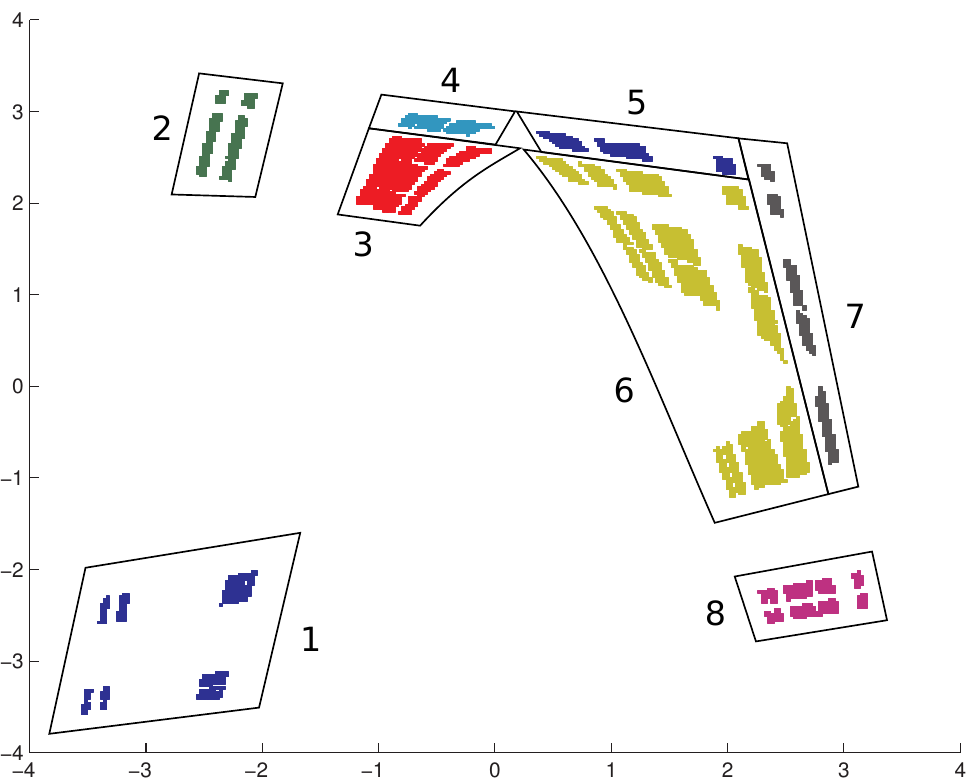}
  \caption{Index pair for $a=5.4$ and $b=-1$, with collections of regions labeled to indicate the symbols for the smaller, shift-equivalent symbol system.}
  \label{fig:mackay-5_4-ip}	  % label after caption to get numbering right
\end{figure}

\begin{theorem}%[Subshift for $f_{5.4}$]
  \label{thm:dms}
  The map $f_{5.4}$ is semi-conjugate to the subshift $\tdms$ given in Figure~\ref{fig:t-dms}.  Moreover the symbols of $\tdms$ correspond to the regions labeled in Figure~\ref{fig:mackay-5_4-ip}, which are the same regions conjectured by DMS.
\end{theorem}
\begin{proof}
  Applying Algorithm~\ref{alg:amalg} to $\tdft$, we obtain a strong shift equivalence between $\tdms$ and $\tdft$, which shows that the corresponding subshifts are indeed conjugate.  Moreover, the amalgamated vertices can be chosen so that we obtain the same partition that was used by Davis et al., which is shown in Figure~\ref{fig:mackay-5_4-ip}, with the regions labeled so as to match the symbols (row indices) of $\tdms$.
\end{proof}

\begin{figure}[htb]
  \centering
  \includegraphics[width = 0.8\textwidth]{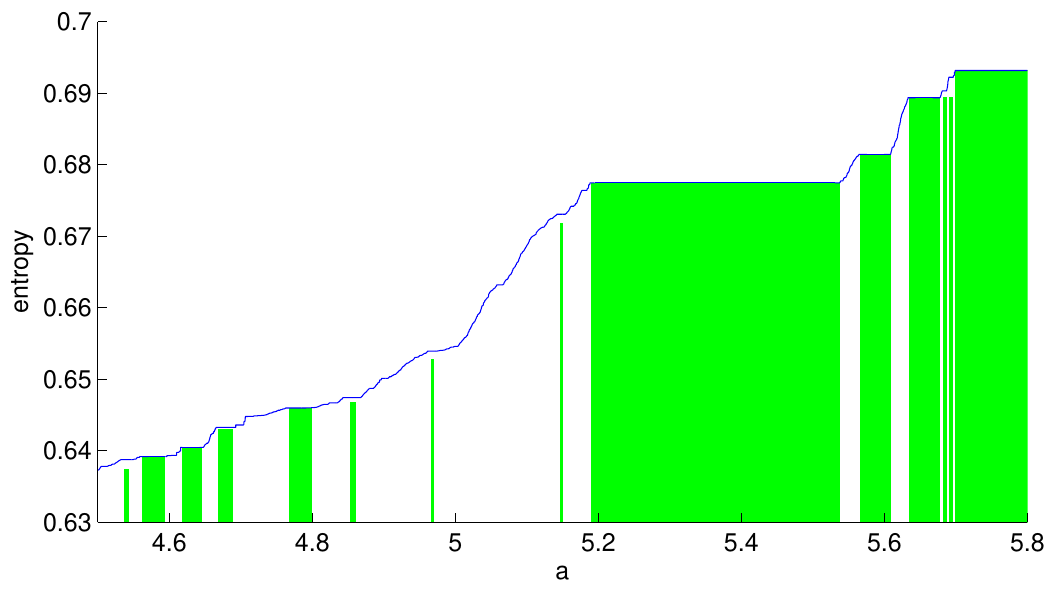}
  \vspace{-10pt}
  \caption{Topological entropy of the area-preserving H\'{e}non maps, where $b=-1$.  The estimates produced by Hagiwara and Shudo are in blue, and our rigorous lower bounds are in green.}
  \label{fig:ap-ents}
\end{figure}

Our method also gives lower bounds which match the values conjectured by Davis et al. for $a=5.59$ and $a=5.65$, as well as the value conjectured by Hagiwara and Shudo for $a=4.58$.  In addition to the these values, we also focus on 11 other values, which all together correspond to the first 15 area-preserving plateaus computed by Arai (the 16th is the maximal entropy plateau, which is plateau 1 in the previous section).  Our results are summarized in the following theorem.

\begin{theorem}%[Entropy Bounds for Area-Preserving H\'{e}non]
  \label{thm:ap-bounds}
  \vbox{The following entropy bounds hold for the H\'{e}non maps $f_a = f_{a,-1}$.  Here we write $h(f_{[a_0,a_1]}) \geq v$ to mean $\forall a \in [a_0,a_1], h(f_a) \geq v$.}\nobreak
  \begin{displaymath}
    \begin{matrix}
        1.  & h\left(f_{[4.5383, \: 4.5386]}\right) \geq 0.6373 \quad & 2.  & h\left(f_{[4.5388, \: 4.5430]}\right) \geq 0.6373 \\[1pt] 3.  & h\left(f_{[4.5624, \: 4.5931]}\right) \geq 0.6391 \quad & 4.  & h\left(f_{[4.6189, \: 4.6458]}\right) \geq 0.6404 \\[1pt] 5.  & h\left(f_{[4.6694, \: 4.6881]}\right) \geq 0.6429 \quad & 6.  & h\left(f_{[4.7682, \: 4.7993]}\right) \geq 0.6459 \\[1pt] 7.  & h\left(f_{[4.8530, \: 4.8604]}\right) \geq 0.6466 \quad & 8.  & h\left(f_{[4.9666, \: 4.9692]}\right) \geq 0.6527 \\[1pt] 9.  & h\left(f_{[5.1470, \: 5.1497]}\right) \geq 0.6718 \quad & 10. & h\left(f_{[5.1904, \: 5.5366]}\right) \geq 0.6774 \\[1pt] 11. & h\left(f_{[5.5659, \: 5.6078]}\right) \geq 0.6814 \quad & 12. & h\left(f_{[5.6343, \: 5.6769]}\right) \geq 0.6893 \\[1pt] 13. & h\left(f_{[5.6821, \: 5.6858]}\right) \geq 0.6893 \quad & 14. & h\left(f_{[5.6859, \: 5.6860]}\right) \geq 0.6893 \\[1pt] 15. & h\left(f_{[5.6917, \: 5.6952]}\right) \geq 0.6893 \quad
    \end{matrix}   
  \end{displaymath}
\end{theorem}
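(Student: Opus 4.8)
The plan is to follow exactly the strategy used in the proof of Theorem~\ref{thm:mainresult}, now specialized to the line $b=-1$. For each of the 15 intervals $[a_0,a_1]$ in the statement — which by \cite{zinplats} are (contained in) hyperbolic plateaus of the area-preserving H\'{e}non family — I would first fix a representative parameter $a_i\in[a_0,a_1]$ and set $f_{a_i}=f_{a_i,-1}$. Then I would run the three-step DFT pipeline of Section~\ref{sec:technique}: (i) overlay a grid on a fixed box $B_0$ enclosing the nonwandering set and compute a rigorous outer enclosure of the image of each box under $f_{a_i}$; (ii) since $|b|=1$ the nonwandering set is disconnected (as argued in Section~\ref{sec:technique}), so at a fine enough resolution the covering already separates into isolated pieces and a Conley index pair can be extracted directly, with the index computed via \cite{homcubes}; (iii) read off from the index a transition matrix $A$ whose subshift is semi-conjugate to $f_{a_i}$, so that by Theorem~\ref{thm:topentsemiconj} its entropy $\log(\mathrm{sp}(A))$ is a lower bound for $h(f_{a_i})$.

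Next I would repeat this at the 64--72 resolutions described in Section~\ref{sec:technique}, optionally amalgamating the resulting subshift via Theorem~\ref{thm:amalg} to keep the matrices manageable, and record the maximum entropy value obtained over all resolutions; these are the numbers $0.6374,\,0.6373,\,\dots,\,0.6922$ appearing in the statement. Finally, because \cite{zinplats} certifies that each interval $[a_0,a_1]$ lies in a single hyperbolic plateau, structural stability makes all $f_a$ with $a$ in that interval conjugate, hence by Theorem~\ref{thm:topentsemiconj} they share the same topological entropy; so the lower bound proved for the representative $a_i$ transfers to every $a$ in the interval, which is exactly the assertion $h(f_{[a_0,a_1]})\geq v$.

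The main obstacle is not conceptual but computational: the entire argument rests on the rigor of the interval-arithmetic box enclosures and of the homology and Conley-index computations, so one must be certain that the outward rounding in step (i), the index-pair verification in step (ii), and the spectral-radius estimate in step (iii) are all validated rather than merely numerical. A secondary item worth checking — a sanity check rather than part of the proof proper — is the bookkeeping against the literature: confirming that the bounds for the plateaus containing $a=4.58$, $5.4$, $5.59$, and $5.65$ reproduce the values conjectured by Davis, MacKay, and Sannami~\cite{mackay} and by Hagiwara and Shudo~\cite{prunehenon}, and that the $a=5.4$ plateau here is the same as plateau~11 of Theorem~\ref{thm:mainresult}, as expected.
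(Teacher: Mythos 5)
Your proposal matches the paper's proof: the paper likewise runs the DFT computation at a single representative $a$-value for each of the 15 area-preserving plateaus and then extends the resulting bound to the whole plateau via Arai's hyperbolicity certification \cite{zinplats}, structural stability, and Theorem~\ref{thm:topentsemiconj}. Your added remarks on the disconnectedness of the nonwandering set, the multiple resolutions, and the cross-checks against DMS and Hagiwara--Shudo are all consistent with the surrounding discussion in the paper.
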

\begin{proof}
  Using the DFT method (see Section~\ref{sec:dft}), we computed rigorous lower bounds on topological entropy for a single $a$ value for each plateau; the representatives chosen were the following: 4.5385, 4.5409, 4.5800, 4.6323, 4.6788, 4.7838, 4.8600, 4.9679, 5.1483, 5.4000, 5.5900, 5.6500, 5.6839, 5.6859, 5.6934.  Combining these bounds with the hyperbolic plateaus computed in~\cite{zinplats}, and using Theorem~\ref{thm:topentsemiconj}, we can extend each bound to its corresponding plateau.
\end{proof}

Figure~\ref{fig:ap-ents} shows a plot of the lower bounds from Theorem~\ref{thm:ap-bounds}, shown against the estimates computed by Shudo and Hagiwara in~\cite{prunehenon}.  The 4 cases discussed above correspond to plateaus 3, 10, 11, and 12.  For these plateaus, our lower bounds match the estimates exactly, and our bounds for plateaus 4, 5, and 6 are very close.  This is roughly what one would expect given the resolution plots shown in Figure~\ref{fig:ent-vs-res-1d}.  An interesting trend we see in these data is that the algorithm performed better on the larger plateaus.  This is perhaps because the stable and unstable manifolds seem to be more transverse the farther the parameters are from a bifurcation, making it easier to isolate the important regions of the phase space.

\begin{figure}[!ht]
  \centering
  \includegraphics[width = 0.6\textwidth]{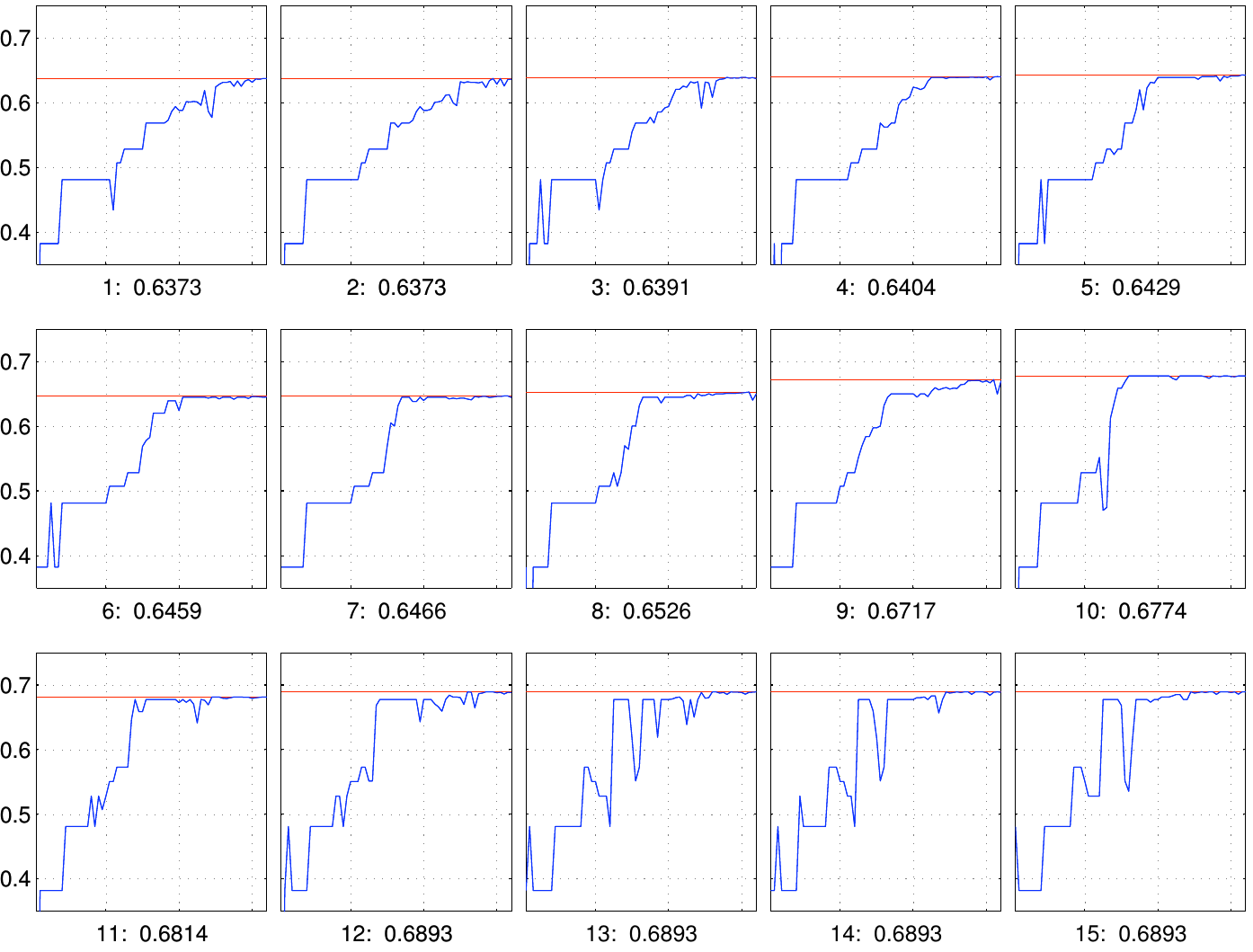}
  \caption{Entropy against resolution, as in
    Figure~\ref{fig:ent-vs-res-2d}.}
  % The lower plot compares the 15
  % plots together to illustrate the monotonicity of the entropy we
  % compute as we vary $a$, even at a fixed resolution.}
  \label{fig:ent-vs-res-1d}
\end{figure}

\subsection*{Acknowledgements} The author would like to thank John Smillie for all of his guidance and advice, Zin Arai for generously sharing his useful data, Sarah Day and Rodrigo Trevi\~no for their helpful comments on numerous drafts of this paper, and Daniel Lepage for his help with the visualization in Figure~\ref{fig:plat-ents-3d}.

\bibliographystyle{plain}
%\bibliographystyle{jmb}
%\newpage
{\small \bibliography{plats-siuro}}

\begin{thebibliography}{10}

\bibitem{zinplats}
Zin Arai.
\newblock On hyperbolic plateaus of the {H}\'enon map.
\newblock {\em Experiment. Math.}, 16(2):181--188, 2007.

\bibitem{zinloops}
Zin Arai.
\newblock On loops in the hyperbolic locus of the complex {H}\'{e}non map and
  their monodromies.
\newblock Preprint, 2008.

\bibitem{bowen}
Rufus Bowen.
\newblock Periodic points and measures for {A}xiom {$A$} diffeomorphisms.
\newblock {\em Trans. Amer. Math. Soc.}, 154:377--397, 1971.

\bibitem{chomp}
{C}omputational {H}omology~{P}roject ({CH}om{P}), 2008.
\newblock \url{http://chomp.rutgers.edu/}.

\bibitem{mackay}
M.~J. Davis, R.~S. MacKay, and A.~Sannami.
\newblock Markov shifts in the {H}\'enon family.
\newblock {\em Phys. D}, 52(2-3):171--178, 1991.

\bibitem{dft}
Sarah Day, Rafael Frongillo, and Rodrigo Trevi\~{n}o.
\newblock Algorithms for rigorous entropy bounds and symbolic dynamics.
\newblock {\em SIAM Journal on Applied Dynamical Systems}, 7:1477--1506, 2008.

\bibitem{howtoprune}
Andr{\'e} de~Carvalho and Toby Hall.
\newblock How to prune a horseshoe.
\newblock {\em Nonlinearity}, 15(3):R19--R68, 2002.

\bibitem{gaio}
Michael Dellnitz, Gary Froyland, and Oliver Junge.
\newblock The algorithms behind {GAIO}-set oriented numerical methods for
  dynamical systems.
\newblock In {\em Ergodic theory, analysis, and efficient simulation of
  dynamical systems}, pages 145--174, 805--807. Springer, Berlin, 2001.

\bibitem{std}
Rafael Frongillo and Rodrigo Trevi{\~n}o.
\newblock Efficient automation of index pairs in computational conley index
  theory.
\newblock {\em SIAM Journal on Applied Dynamical Systems}, 11:82, 2012.

\bibitem{amalg}
Rafael~M. Frongillo.
\newblock On the hardness of state amalgamation.
\newblock Preprint, 2014.

\bibitem{guck}
John Guckenheimer and Philip Holmes.
\newblock {\em Nonlinear oscillations, dynamical systems, and bifurcations of
  vector fields}, volume~42 of {\em Applied Mathematical Sciences}.
\newblock Springer-Verlag, New York, 1990.
\newblock Revised and corrected reprint of the 1983 original.

\bibitem{prunehenon}
Ryouichi Hagiwara and Akira Shudo.
\newblock An algorithm to prune the area-preserving {H}\'enon map.
\newblock {\em J. Phys. A}, 37(44):10521--10543, 2004.

\bibitem{hruska}
Suzanne~Lynch Hruska.
\newblock A numerical method for constructing the hyperbolic structure of
  complex {H}\'enon mappings.
\newblock {\em Found. Comput. Math.}, 6(4):427--455, 2006.

\bibitem{CompHom}
Tomasz Kaczynski, Konstantin Mischaikow, and Marian Mrozek.
\newblock {\em Computational homology}, volume 157 of {\em Applied Mathematical
  Sciences}.
\newblock Springer-Verlag, New York, 2004.

\bibitem{coding}
Douglas Lind and Brian Marcus.
\newblock {\em An Introduction to Symbolic Dynamics and Coding}.
\newblock Cambridge University Press, Cambridge, 1995.

\bibitem{mazur}
M.~Mazur and J.~Tabor.
\newblock Computational hyperbolicity.
\newblock {\em Discrete and Continuous Dynamical Systems (DCDS-A)},
  29(3):1175--1189, 2011.

\bibitem{m-mrozek}
Konstantin Mischaikow and Marian Mrozek.
\newblock Conley index.
\newblock In {\em Handbook of Dynamical Systems}, pages 393--460. Elsevier,
  Berlin, 2002.

\bibitem{newhouse}
S.~Newhouse, M.~Berz, J.~Grote, and K.~Makino.
\newblock On the estimation of topological entropy on surfaces.
\newblock {\em Contemporary Mathematics}, 469:243--270, 2008.

\bibitem{homcubes}
Pawel Pilarczyk.
\newblock {\em Homology Computation-Software and Examples}.
\newblock Jagiellonian University, 1998.
\newblock \url{http://www.pawelpilarczyk.com/homology.php}.

\bibitem{salamon}
Joel~W. Robbin and Dietmar Salamon.
\newblock Dynamical systems, shape theory and the {C}onley index.
\newblock {\em Ergodic Theory Dynam. Systems}, 8$\sp *$(Charles Conley Memorial
  Issue):375--393, 1988.

\bibitem{robinson}
Clark Robinson.
\newblock {\em Dynamical systems}.
\newblock Studies in Advanced Mathematics. CRC Press, Boca Raton, FL, 1995.
\newblock Stability, symbolic dynamics, and chaos.

\bibitem{intlab}
{S.M.} Rump.
\newblock {INTLAB - INTerval LABoratory}.
\newblock In Tibor Csendes, editor, {\em {Developments~in~Reliable Computing}},
  pages 77--104. Kluwer Academic Publishers, Dordrecht, 1999.
\newblock \url{http://www.ti3.tu-harburg.de/rump/}.

\bibitem{homoclinicbif}
D.~Sterling, H.~R. Dullin, and J.~D. Meiss.
\newblock Homoclinic bifurcations for the {H}\'enon map.
\newblock {\em Phys. D}, 134(2):153--184, 1999.

\bibitem{williams1973classification}
R.~F. Williams.
\newblock Classification of subshifts of finite type.
\newblock {\em Annals of Mathematics}, 98(1):120--153, 1973.

\end{thebibliography}
\end{document}